\newtheorem{theorem}{Theorem}[section]
\newtheorem{lemma}[theorem]{Lemma}
\newtheorem{proposition}[theorem]{Proposition}
\newtheorem{corollary}[theorem]{Corollary}
\theoremstyle{definition}
\newtheorem{definition}[theorem]{Definition}
\newtheorem{example}[theorem]{Example}
\newtheorem{remark}[theorem]{Remark}
\numberwithin{equation}{section}
\def\<{\langle}
\def\>{\rangle}
\long\def\alert#1{\smallskip{\hskip\parindent\vrule%
\vbox{\advance\hsize-2\parindent\hrule\smallskip\parindent.4\parindent%
\narrower\noindent#1\smallskip\hrule}\vrule\hfill}\smallskip}
\begin{document}

\title[\ci\ $2$-ABSORBING PRIMARY SUBMODULES]{ON $2$-ABSORBING PRIMARY SUBMODULES OF MODULES OVER COMMUTATIVE RINGS}

\author{Hojjat Mostafanasab, Ece Yetkin, \"{U}nsal Tekir \\ 
and Ahmad Yousefian Darani$^{*}$}
\address{Department of Mathematics and Applications, University of Mohaghegh Ardabili, P. O. Box 179, Ardabil, Iran} \email{h.mostafanasab@uma.ac.ir and yousefian@uma.ac.ir}

\address{Department of Mathematics, Marmara University, Istanbul, Turkey}
\email{utekir@marmara.edu.tr and ece.yetkin@marmara.edu.tr}
\thanks{$^*$Corresponding author}
\thanks{Support information for the other authors.}

\thanks{{\scriptsize
\hskip -0.4 true cm 2010 {\it Mathematics Subject Classification}.
Primary 13A15; Secondary 13F05, 13G05.
\newline Keywords: multiplication module; primary submodule; prime submodule; $2$-absorbing submodule; $n$-absorbing submodule.}}

\begin{abstract}
All rings are commutative with $1\neq0$, and all modules are unital. The
purpose of this paper is to investigate the concept of $2$-absorbing primary
submodules generalizing $2$-absorbing primary ideals of rings. Let $M$ be an 
$R$-module. A proper submodule $N$ of an $R$-module $M$ is called a \textit{%
2-absorbing primary submodule} of $M$ if whenever $a,b\in R$ and $m\in M$
and $abm\in N$, then $am\in M$-$\mathrm{rad}(N)$ or $bm\in M$-$\mathrm{rad}%
(N)$ or $ab\in(N:_{R}M)$. It is shown that a proper submodule $N$ of $M$ is
a 2-absorbing primary submodule if and only if whenever $I_{1}I_{2}K%
\subseteq N$ for some ideals $I_{1},I_{2}$ of $R$ and some submodule $K$ of $%
M$, then $I_{1}I_{2}\subseteq(N:_{R}M)$ or $I_{1}K\subseteq M$-${rad}(N)$ or 
$I_{2}K\subseteq M$-${rad}(N)$. We prove that for a submodule $N$ of an $R$%
-module $M$ if $M$-$rad(N)$ is a prime submodule of $M$, then $N$ is a
2-absorbing primary submodule of $M$. If $N$ is a 2-absorbing primary
submodule of a finitely generated multiplication $R$-module $M$, then $%
(N:_{R}M)$ is a 2-absorbing primary ideal of $R$ and $M$-${rad}(N)$ is a
2-absorbing submodule of $M$.
\end{abstract}

\maketitle

\baselineskip 17 pt

\section{Introduction and Preliminaries}

Throughout this paper all rings are commutative with a nonzero identity and
all modules are considered to be unitary. Prime submodules have an important
role in the theory of modules over commutative rings. Let $M$ be a module
over a commutative ring $R$. A \textit{prime} (resp. \textit{primary})
submodule is a proper submodule $N$ of $M$ with the property that for $a\in R
$ and $m\in M$, $am\in N$ implies that $m\in N$ or $a\in (N:_{R}M)$ (resp. $%
a^{k}\in (N:_{R}M)$ for some positive integer $k$). In this case $p=(N:_{R}M)
$ (resp. $p=\sqrt{(N:_{R}M)}$) is a prime ideal of $R$. There are several
ways to generalize the concept of prime submodules. Weakly prime submodules
were introduced by Ebrahimi Atani and Farzalipour in \cite{E1}. A proper
submodule $N$ of $M$ is \textit{weakly prime} if for $a\in R$ and $m\in M$
with $0\neq am\in N$, either $m\in N$ or $a\in (N:_{R}M)$. Behboodi and
Koohi in \cite{BK} defined another class of submodules and called it weakly
prime. Their paper is on the basis of some recent papers devoted to this new
class of submodules. Let $R$ be a ring and $M$ an $R$-module. A proper
submodule $N$ of $M$ is said to be \textit{weakly prime} when for $a,b\in R$
and $m\in M$, $abm\in N$ implies that $am\in N$ or $bm\in N$. To avoid the
ambiguity, Behboodi renamed this concept and called submodules introduced in 
\cite{BK}, \emph{classical prime submodule}.

Badawi in \cite{B} generalized the concept of prime ideals in a different
way. He defined a nonzero proper ideal $I$ of $R$ to be a \textit{%
2-absorbing ideal} of $R$ if whenever $a, b, c\in R$ and $abc\in I$, then $%
ab\in I$ or $ac\in I$ or $bc\in I$. This definition can obviously be made
for any ideal of $R$. This concept has a generalization, called weakly $2$%
-absorbing ideals, which has studied in \cite{YB}. A proper ideal $I$ of $R$
to be a \textit{weakly 2-absorbing ideal of} $R$ if whenever $a, b, c\in R$
and $0\neq abc\in I$, then $ab\in I$ or $ac\in I$ or $bc\in I$. Anderson and
Badawi \cite{AB1} generalized the concept of $2$-absorbing ideals to $n$%
-absorbing ideals. According to their definition, a proper ideal $I$ of $R$
is called an $n$-\textit{absorbing} (resp. \textit{strongly $n$-absorbing})
ideal if whenever $x_1\cdots x_{n+1}\in I$ for $x_1,...,x_{n+1}\in R$ (resp. 
$I_1\cdots I_{n+1}\subseteq I$ for ideals $I_1, \cdots I_{n+1}$ of $R$),
then there are $n$ of the $x_i$'s (resp. $n$ of the $I_i$'s) whose product
is in $I$. They proved that a proper ideal $I$ of $R$ is 2-absorbing if and
only if $I$ is strongly 2-absorbing.

In \cite{YF1}, the concept of $2$-absorbing and weakly $2$-absorbing ideals
generalized to submodules of a module over a commutative ring. Let $M$ be an 
$R$-module and $N$ a proper submodule of $M$. $N$ is said to be a \textit{%
2-absorbing submodule} (resp. \textit{weakly 2-absorbing submodule}) of $M$
if whenever $a,b\in R$ and $m\in M$ with $abm\in N$ (resp. $0\neq abm\in N$%
), then $ab\in (N:_{R}M)$ or $am\in N$ or $bm\in N$. Badawi et. al. in \cite%
{Bt} introduced the concept of $2$-absorbing primary ideals, where a proper
ideal $I$ of $R$ is called \textit{2-absorbing primary} if whenever $%
a,b,c\in R$ with $abc\in I$, then $ab\in I$ or $ac\in \sqrt{I}$ or $bc\in 
\sqrt{I}$.

Let $R$ be a ring, $M$ an $R$-module and $N$ a submodule of $M$. We will
denote by $(N:_{R}M)$ \textit{the residual of $N$ by} $M$, that is, the set
of all $r\in R$ such that $rM\subseteq N$. The annihilator of $M$ which is
denoted by $ann_{R}(M)$ is $(0:_{R}M)$. An $R$-module $M$ is called a 
\textit{multiplication module} if every submodule $N$ of $M$ has the form $IM
$ for some ideal $I$ of $R$. Note that, since $I\subseteq (N:_{R}M)$ then $%
N=IM\subseteq (N:_{R}M)M\subseteq N$. So that $N=(N:_{R}M)M$ \cite{ES}.
Finitely generated faithful multiplication modules are cancellation modules 
\cite[ Corollary to Theorem 9]{S}, where an $R$-module $M$ is defined to be
a \textit{cancellation module} if $IM=JM$ for ideals $I$ and $J$ of $R$
implies $I=J$. It is well-known that if $R$ is a commutative ring and $M$ a
nonzero multiplication $R$-module, then every proper submodule of $M$ is
contained in a maximal submodule of M and $K$ is a maximal submodule of $M$
if and only if there exists a maximal ideal $\mathfrak{m}$ of $R$ such that $%
K=\mathfrak{m}M$ \cite[Theorem 2.5]{ES}. If $M$ is a finitely generated
faithful multiplication $R$-module (hence cancellation), then it is easy to
verify that $(IN:_{R}M)=I(N:_{R}M)$ for each submodule $N$ of $M$ and each
ideal $I$ of $R$. For a submodule $N$ of $M$, if $N=IM$ for some ideal $I$
of $R$, then we say that $I$ is a presentation ideal of $N$. Clearly, every
submodule of $M$ has a presentation ideal if and only if $M$ is a
multiplication module. Let $N$ and $K$ be submodules of a multiplication $R$%
-module $M$ with $N=I_{1}M$ and $K=I_{2}M$ for some ideals $I_{1}$ and $I_{2}
$ of $R$. The product of $N$ and $K$ denoted by $NK$ is defined by $%
NK=I_{1}I_{2}M$. Then by \cite[Theorem 3.4]{Am}, the product of $N$ and $K$
is independent of presentations of $N$ and $K$. Moreover, for $a,b\in M$, by 
$ab$, we mean the product of $Ra$ and $Rb$. Clearly, $NK$ is a submodule of $%
M$ and $NK\subseteq N\cap K$ (see \cite{Am}). Let $N$ be a proper submodule
of a nonzero $R$-module $M$. Then the $M$-radical of $N$, denoted by $M$-$%
\mathrm{rad}(N)$, is defined to be the intersection of all prime submodules
of $M$ containing $N$. If $M$ has no prime submodule containing $N$, then we
say $M$-$\mathrm{rad}(N)=M$. It is shown in \textrm{\cite[Theorem 2.12]{ES}}
that if $N$ is a proper submodule of a multiplication $R$-module $M$, then $M
$-$\mathrm{rad}(N)=\sqrt{(N:_{R}M)}M$. In this paper we define the concept
of $2$-absorbing primary submodules. We give some basic results of this
class of submodules and discuss on the relations among $2$-absorbing ideals, 
$2$-absorbing submodules, $2$-absorbing primary ideals and $2$-absorbing
primary submodules.

\section{Properties of $2$-absorbing primary submodules}

\begin{definition}
A proper submodule $N$ of an $R$-module $M$ is called a \textit{2-absorbing
primary submodule} (resp. \textit{weakly 2-absorbing primary submodule}) of $%
M$ if whenever $a,b\in R$ and $m\in M$ and $abm\in N$ (resp. $0\neq abm\in N)
$, then $am\in M$-$\mathrm{rad}(N)$ or $bm\in M$-$\mathrm{rad}(N)$ or $%
ab\in(N:_{R}M)$.
\end{definition}

\begin{example}
Let $p$ be a fixed prime integer and $N_{0}=\mathbb{%
\mathbb{N}
}\cup \{0\}$. Each proper $\mathbb{Z}$-submodule of $\mathbb{Z}(p^{\infty })$
is of the form $G_{t}=\langle 1/p^{t}+\mathbb{Z}\rangle $ for some $t\in
N_{0}$. In \textrm{\cite[Example 1]{EC} it was shown that every submodule $%
G_{t}$ is not primary. For each $t\in N_{0}$, $(G_{t}:_{\mathbb{Z}}\mathbb{Z}%
(p^{\infty }))=0$. Note that $p^{2}\left( \frac{1}{p^{t+2}}+\mathbb{Z}%
\right) =\frac{1}{p^{t}}+\mathbb{Z}\in G_{t}$, but neither $p^{2}\in
(G_{t}:_{\mathbb{Z}}\mathbb{Z}(p^{\infty }))=0$ nor $p\left( \frac{1}{p^{t+2}%
}+\mathbb{Z}\right) \in G_{t}$. Hence $\mathbb{Z}(p^{\infty })$ has no $2$%
-absorbing submodule. Since every prime submodule is $2$-absorbing, then $%
\mathbb{Z}(p^{\infty })$ has no prime submodule. Therefore $\mathbb{Z}%
(p^{\infty })$-$\mathrm{rad}(G_{t})=\mathbb{Z}(p^{\infty })$, and so $G_{t}$
is a 2-absorbing primary submodule of $\mathbb{Z}(p^{\infty })$. }
\end{example}

\begin{theorem}
Let $N$ be a proper submodule of an $R$-module $M$. Then the following
conditions are equivalent:

\begin{enumerate}
\item $N$ is a 2-absorbing primary submodule of $M$;

\item For every elements $a,b\in R$ such that $ab\notin(N:_{R}M)$, $%
(N:_{M}ab)\subseteq(M$-$rad(N):_{M}a)\cup(M$-$rad(N):_{M}b);$

\item For every elements $a,b\in R$ such that $ab\notin(N:_{R}M)$, $%
(N:_{M}ab)\subseteq(M$-$rad(N):_{M}a)$ or $(N:_{M}ab)\subseteq(M$-$%
rad(N):_{M}b).$
\end{enumerate}
\end{theorem}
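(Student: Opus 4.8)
The plan is to treat this as three essentially elementary steps: show that $(1)\Leftrightarrow(2)$ is just the definition rewritten in residual notation, note that $(3)\Rightarrow(2)$ is trivial, and reduce the only substantive implication $(2)\Rightarrow(3)$ to the classical fact that an additive subgroup covered by two subgroups must lie inside one of them.

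First I would unwind the colon notation. For fixed $a,b\in R$ one has $m\in(N:_{M}ab)$ iff $abm\in N$, $m\in(M\text{-}\mathrm{rad}(N):_{M}a)$ iff $am\in M\text{-}\mathrm{rad}(N)$, and $m\in(M\text{-}\mathrm{rad}(N):_{M}b)$ iff $bm\in M\text{-}\mathrm{rad}(N)$. With this dictionary, the defining property of a $2$-absorbing primary submodule, restricted to those pairs $a,b$ with $ab\notin(N:_{R}M)$, says exactly that every $m$ with $abm\in N$ satisfies $am\in M\text{-}\mathrm{rad}(N)$ or $bm\in M\text{-}\mathrm{rad}(N)$, which is precisely the inclusion $(N:_{M}ab)\subseteq(M\text{-}\mathrm{rad}(N):_{M}a)\cup(M\text{-}\mathrm{rad}(N):_{M}b)$ of (2). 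The pairs with $ab\in(N:_{R}M)$ are automatically accounted for by the third disjunct of the definition and impose no condition in (2). Reading the same equivalence backwards gives $(2)\Rightarrow(1)$, so $(1)\Leftrightarrow(2)$.

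Next, $(3)\Rightarrow(2)$ is immediate, since a set contained in either of two sets is contained in their union. The heart of the argument is $(2)\Rightarrow(3)$. Here I would first observe that each of the three sets $(N:_{M}ab)$, $(M\text{-}\mathrm{rad}(N):_{M}a)$, and $(M\text{-}\mathrm{rad}(N):_{M}b)$ is a residual of a submodule of $M$, hence itself a submodule, and in particular an additive subgroup of $M$. Then I would invoke the standard lemma that if an additive subgroup is contained in the union of two subgroups, it lies in one of them: assuming $(N:_{M}ab)$ is contained in neither $(M\text{-}\mathrm{rad}(N):_{M}a)$ nor $(M\text{-}\mathrm{rad}(N):_{M}b)$, choose $m_{1}\in(N:_{M}ab)$ with $m_{1}\notin(M\text{-}\mathrm{rad}(N):_{M}a)$ and $m_{2}\in(N:_{M}ab)$ with $m_{2}\notin(M\text{-}\mathrm{rad}(N):_{M}b)$. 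By (2) the element $m_{1}$ must then lie in $(M\text{-}\mathrm{rad}(N):_{M}b)$ and $m_{2}$ in $(M\text{-}\mathrm{rad}(N):_{M}a)$, and examining $m_{1}+m_{2}\in(N:_{M}ab)$ yields a contradiction whichever summand it falls into. The main obstacle—really the only point needing care—is recognizing that these three sets are genuine submodules, so that the covering lemma applies; once that is in place the remainder is routine bookkeeping.
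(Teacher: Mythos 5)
Your proof is correct and takes essentially the same approach as the paper's: both unwind the colon notation to get $(1)\Leftrightarrow(2)$ and reduce the substantive implication $(2)\Rightarrow(3)$ to the elementary fact that a subgroup contained in the union of two subgroups lies in one of them, applied to the submodules $(N:_{M}ab)$, $(M\text{-}\mathrm{rad}(N):_{M}a)$ and $(M\text{-}\mathrm{rad}(N):_{M}b)$. The only differences are cosmetic: the paper cites the covering fact without proof and closes the cycle via a direct $(3)\Rightarrow(1)$, whereas you verify the covering lemma inline (correctly, via $m_{1}+m_{2}$) and obtain $(3)\Rightarrow(1)$ through $(3)\Rightarrow(2)\Rightarrow(1)$.
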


\begin{proof}
(1)$\Rightarrow$(2) Suppose that $a,b\in R$ such that $ab\notin(N:_{R}M)$.
Let $m\in(N:_{M}ab)$. Then $abm\in N$, and so either $ma\in M$-$rad(N)$ or $%
bm\in M$-$rad(N)$. Therefore either $m\in(M$-$rad(N):_{M}a)$ or $m\in(M$-$%
rad(N):_{M}b)$. Hence $(N:_{M}ab)\subseteq(M$-$rad(N):_{M}a)\cup(M$-$%
rad(N):_{M}b)$.\newline
(2)$\Rightarrow$(3) Notice to the fact that if a submodule (a subgroup) is a
subset of the union of two submodules (two subgroups), then it is a subset
of one of them. Thus we have $(N:_{M}ab)\subseteq(M$-$rad(N):_{M}a)$ or $%
(N:_{M}ab)\subseteq(M$-$rad(N):_{M}b).$\newline
(3)$\Rightarrow$(1) is straightforward.
\end{proof}

\begin{lemma}
\label{finmul} Let $M$ be a finitely generated multiplication $R$-module.
Then for any submodule $N$ of $M$, $\sqrt{(N:_{R}M)}=(M$-${rad}(N):_{R}M)$.
\end{lemma}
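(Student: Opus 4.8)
The plan is to reduce everything to the formula $M$-$\mathrm{rad}(N)=\sqrt{(N:_{R}M)}M$ recorded from \cite[Theorem 2.12]{ES} and then to compute the residual of this submodule by $M$. Writing $J=\sqrt{(N:_{R}M)}$, the equality $M$-$\mathrm{rad}(N)=JM$ turns the assertion into the single identity $(JM:_{R}M)=J$. One inclusion is immediate: if $r\in J$, then $rM\subseteq JM=M$-$\mathrm{rad}(N)$, so $J\subseteq(M$-$\mathrm{rad}(N):_{R}M)$. The whole content lies in the reverse inclusion $(JM:_{R}M)\subseteq J$.

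Since $M$ is only assumed finitely generated and multiplication, and not faithful, the cancellation property quoted in the preliminaries is not directly available; this is the point I expect to be the main obstacle. My remedy is to pass to the ring $\overline{R}=R/ann_{R}(M)$. Viewed as an $\overline{R}$-module, $M$ is finitely generated, faithful and multiplication: finite generation is clear from the same generating set, the multiplication property is inherited because every submodule $N=IM$ can be rewritten as $\overline{I}M$ for the image $\overline{I}$ of $I$, and faithfulness is automatic after quotienting by the annihilator. Hence $M$ is a cancellation $\overline{R}$-module by \cite[Corollary to Theorem 9]{S}. I would also note that prime submodules of $M$, and therefore $M$-$\mathrm{rad}(N)$, are insensitive to whether $M$ is regarded over $R$ or over $\overline{R}$, since the actions $am$ and $\overline{a}m$ agree.

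Next I would upgrade cancellation to its containment form $IM\subseteq KM\Rightarrow I\subseteq K$ for ideals of $\overline{R}$: from $IM\subseteq KM$ one gets $(I+K)M=IM+KM=KM$, and cancellation forces $I+K=K$, that is, $I\subseteq K$. Applying this with the image $\overline{J}$ of $J$ (observing $JM=\overline{J}M$), any $r\in R$ with $rM\subseteq JM$ satisfies $\overline{r}\,M\subseteq\overline{J}\,M$, hence $\overline{r}\in\overline{J}$, i.e. $r\in J+ann_{R}(M)$. Because $ann_{R}(M)=(0:_{R}M)\subseteq(N:_{R}M)\subseteq J$, this is precisely $r\in J$, which gives $(JM:_{R}M)\subseteq J$.

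Combining the two inclusions yields $(M$-$\mathrm{rad}(N):_{R}M)=(JM:_{R}M)=J=\sqrt{(N:_{R}M)}$, as required. The only steps demanding genuine care are the verification that finite generation, the multiplication property, and faithfulness all descend to $\overline{R}$ so that the cited cancellation result applies, together with the passage from cancellation to the containment form used to extract $r\in J$; once these are in place the argument is formal.
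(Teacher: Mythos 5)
Your proof is correct, but it takes a genuinely different route from the paper's. The paper splits the equality into two inclusions: the containment $\sqrt{(N:_{R}M)}\subseteq (M$-$\mathrm{rad}(N):_{R}M)$ is proved directly from the definition of the $M$-radical --- every prime submodule $K\supseteq N$ gives a prime ideal $(K:_{R}M)\supseteq (N:_{R}M)$, hence $\sqrt{(N:_{R}M)}\subseteq (K:_{R}M)$ --- an argument valid for arbitrary modules; the reverse containment $(M$-$\mathrm{rad}(N):_{R}M)\subseteq \sqrt{(N:_{R}M)}$ is simply cited from McCasland--Moore \cite[Theorem 4]{MM}, which needs only that $M$ be finitely generated. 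You instead substitute the radical formula $M$-$\mathrm{rad}(N)=\sqrt{(N:_{R}M)}M$ of \cite[Theorem 2.12]{ES} and reduce the lemma to the single identity $(JM:_{R}M)=J$ for $J=\sqrt{(N:_{R}M)}$, obtaining the nontrivial half from Smith's cancellation result \cite{S} after passing to $\overline{R}=R/\mathrm{ann}_{R}(M)$. That reduction is exactly the right move, since cancellation (equivalently the identity $(IM:_{R}M)=I$ noted in the preliminaries) is available only for finitely generated \emph{faithful} multiplication modules, and your observation that $\mathrm{ann}_{R}(M)\subseteq (N:_{R}M)\subseteq J$ correctly absorbs the annihilator at the end; the derivation of the containment form of cancellation from $(I+K)M=KM$ is also sound. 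The trade-off: your argument is essentially self-contained modulo two facts already quoted in the paper's preliminaries (the ES radical formula and Smith's corollary), but it invokes the multiplication hypothesis in both directions, whereas the paper's route makes visible that the identity actually holds for every finitely generated module, with the multiplication structure playing no role. (Both proofs tacitly take $N$ proper, as the definition of $M$-$\mathrm{rad}$ and the ES formula require; the case $N=M$ is trivial.)
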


\begin{proof}
By \textrm{\cite[Theorem 4]{MM}}, $(M$-$\mathrm{rad}(N):_{R}M)\subseteq\sqrt{%
(N:_{R}M)}$. Now we prove the other containment without any assumption on $M$%
. Let $K$ be a prime submodule of $M$ containing $N$. Then clearly $(K:M)$
is a prime ideal that contains $(N:M)$. Therefore $\sqrt{(N:_{R}M)}%
\subseteq(K:M)$, so $\sqrt{(N:_{R}M)}\subseteq(M$-$\mathrm{rad}(N):_{R}M)$.
\end{proof}

\begin{proposition}
\label{finmul2} Let $M$ be a finitely generated multiplication $R$-module
and $N$ be a submodule of $M$. Then $M$-$rad(N)$ is a primary submodule of $M
$ if and only if$\ M$-$rad(N)$ is a prime submodule of $M$.
\end{proposition}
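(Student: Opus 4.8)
The plan is to establish the non-trivial implication, since the converse---that every prime submodule is primary---is immediate from the definitions. So I will assume that $M$-$\mathrm{rad}(N)$ is a primary submodule and, writing $P=M$-$\mathrm{rad}(N)$ for brevity, show that $P$ is prime.

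The crucial step is to observe that the residual ideal $(P:_{R}M)$ coincides with its own radical. Indeed, because $M$ is a finitely generated multiplication module, Lemma \ref{finmul} gives $(P:_{R}M)=(M$-$\mathrm{rad}(N):_{R}M)=\sqrt{(N:_{R}M)}$, and since the radical of an ideal is radical, $\sqrt{(P:_{R}M)}=(P:_{R}M)$. Now I invoke the standard fact, recalled in the Introduction, that if $P$ is a primary submodule then $\sqrt{(P:_{R}M)}$ is a prime ideal of $R$. Combining the two observations, $(P:_{R}M)$ is itself a prime ideal.

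It then remains to upgrade ``primary'' to ``prime.'' Given $a\in R$ and $m\in M$ with $am\in P$, the primary hypothesis yields $m\in P$ or $a^{k}\in(P:_{R}M)$ for some positive integer $k$; in the latter case, since $(P:_{R}M)$ is prime, $a\in(P:_{R}M)$. In either case the defining condition of a prime submodule is satisfied, so $P$ is prime.

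The only non-routine point is the recognition that $(P:_{R}M)$ is radical, which is precisely where the finitely generated multiplication hypothesis enters through Lemma \ref{finmul}; once this is in hand, the passage from primary to prime is a purely formal consequence of the primality of the residual ideal.
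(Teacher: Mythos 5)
Your proof is correct and takes essentially the same route as the paper's: both hinge on Lemma \ref{finmul}, which gives $(M$-$\mathrm{rad}(N):_{R}M)=\sqrt{(N:_{R}M)}$ and hence that this residual is a radical ideal, after which the primary hypothesis upgrades directly to primeness. Your extra detour through the primality of $(P:_{R}M)$ is harmless but unnecessary, since radicality alone already turns $a^{k}\in(P:_{R}M)$ into $a\in(P:_{R}M)$, which is exactly how the paper concludes.
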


\begin{proof}
Suppose that $M$-$\mathrm{rad}(N)$ is a primary submodule of $M$. Let $a\in R
$ and $m\in M$ be such that $am\in M$-$\mathrm{rad}(N)$ and $m\not\in M$-$%
\mathrm{rad}(N).$ Since $M$-$\mathrm{rad}(N)$ is primary, it follows $a\in 
\sqrt{(M\text{-}\mathrm{rad}(N):_{R}M)}=\sqrt{\sqrt{(N:_{R}M)}}=\newline
\sqrt{(N:_{R}M)}=(M$-$\mathrm{rad}(N):_{R}M)$, by Lemma \ref{finmul}. Thus $M
$-$\mathrm{rad}(N)$ is a prime submodule of $M.$ The converse part is clear.
\end{proof}

\begin{theorem}
\label{m-rad} Let $M$ be a finitely generated multiplication $R$-module. If $%
N$ is a 2-absorbing primary submodule of $M$, then

\begin{enumerate}
\item $(N:_{R}M)$ is a 2-absorbing primary ideal of $R$.

\item $M$-${rad}(N)$ is a 2-absorbing submodule of $M$.
\end{enumerate}
\end{theorem}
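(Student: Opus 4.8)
The plan is to handle the two assertions separately, reducing each to a statement about ideals via Lemma~\ref{finmul} (which identifies $(M\text{-}\mathrm{rad}(N):_{R}M)$ with $\sqrt{(N:_{R}M)}$) together with the multiplication hypothesis, which lets me pass freely between a cyclic submodule $Rm$ and the ideal $(Rm:_{R}M)$. Write $I=(N:_{R}M)$; note $I$ is proper since $N$ is.

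For part (1), take $a,b,c\in R$ with $abc\in I$, so $abcM\subseteq N$, and assume $ab\notin I$. For each $m\in M$ the element $ab(cm)=abcm$ lies in $N$, so the $2$-absorbing primary hypothesis on $N$, together with $ab\notin(N:_{R}M)$, forces $acm\in M\text{-}\mathrm{rad}(N)$ or $bcm\in M\text{-}\mathrm{rad}(N)$ for every $m$. The decisive step is then a covering argument: the sets $(M\text{-}\mathrm{rad}(N):_{M}ac)$ and $(M\text{-}\mathrm{rad}(N):_{M}bc)$ are submodules whose union is all of $M$, and since a module is never the union of two proper submodules (the fact already used in the proof of Theorem~2.3), one of them equals $M$. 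Translating back through Lemma~\ref{finmul} yields $ac\in\sqrt{I}$ or $bc\in\sqrt{I}$, which is exactly the $2$-absorbing primary condition for $I$.

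For part (2), I would first promote part (1) to the statement that $\sqrt{I}$ is a $2$-absorbing \emph{ideal}: if $abc\in\sqrt{I}$ then $a^{n}b^{n}c^{n}\in I$ for some $n$, and applying the $2$-absorbing primary property of $I$ to $a^{n},b^{n},c^{n}$ gives $a^{n}b^{n}\in I$ or $a^{n}c^{n}\in\sqrt{I}$ or $b^{n}c^{n}\in\sqrt{I}$; passing to radicals collapses each case to $ab\in\sqrt{I}$, $ac\in\sqrt{I}$, or $bc\in\sqrt{I}$. Setting $P=M\text{-}\mathrm{rad}(N)=\sqrt{I}\,M$, Lemma~\ref{finmul} gives $(P:_{R}M)=\sqrt{I}$, and $P$ is proper because $\sqrt{I}$ sits inside some maximal ideal $\mathfrak{m}$, whence $P\subseteq\mathfrak{m}M\subsetneq M$ by the maximal-submodule correspondence recalled in the Introduction.

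To finish, I would transfer ``$\sqrt{I}$ is $2$-absorbing'' up to $P$. Given $a,b\in R$ and $m\in M$ with $abm\in P$, multiplication lets me put $J=(Rm:_{R}M)$, so $Rm=JM$ and $(abJ)M=ab\,Rm=R(abm)\subseteq P$, giving $abJ\subseteq(P:_{R}M)=\sqrt{I}$. Here I expect the one genuinely technical point to be the passage from an element-wise $2$-absorbing statement to one about a product of three ideals: I would invoke the equivalence from \cite{AB1}, recalled in the Introduction, that a $2$-absorbing ideal is strongly $2$-absorbing, and apply it to $(Ra)(Rb)J\subseteq\sqrt{I}$ to obtain $ab\in\sqrt{I}$, $aJ\subseteq\sqrt{I}$, or $bJ\subseteq\sqrt{I}$. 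The first case gives $ab\in(P:_{R}M)$; in the second, $am\in a\,Rm=(aJ)M\subseteq\sqrt{I}\,M=P$, and symmetrically $bm\in P$ in the third, so $P$ is $2$-absorbing. The step most worth watching is that this reduction uses only the identity $Rm=(Rm:_{R}M)M$ and the strongly $2$-absorbing equivalence, so the cancellation identity $(IN:_{R}M)=I(N:_{R}M)$ (which needs faithfulness) is never actually required.
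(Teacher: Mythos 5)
Your proof is correct, and while part (1) is close in spirit to the paper's argument, part (2) takes a genuinely different route. In part (1) you observe that for every $m\in M$ either $acm\in M$-$\mathrm{rad}(N)$ or $bcm\in M$-$\mathrm{rad}(N)$, and then invoke the fact that a module is never the union of two proper submodules; the paper runs an inline version of the same idea, picking witnesses $m_{1},m_{2}$ and testing $ab(cm_{1}+cm_{2})\in N$, so the two arguments are essentially equivalent, both hinging on Lemma \ref{finmul}. In part (2) the paper quotes the structure theorem \cite[Theorem 2.3]{Bt} --- $\sqrt{(N:_{R}M)}$ is either a prime $p$ or an intersection $p_{1}\cap p_{2}$ of two primes minimal over $(N:_{R}M)$ --- and transfers this to $M$-$\mathrm{rad}(N)=\sqrt{(N:_{R}M)}\,M$ via \cite[Corollaries 1.7 and 2.11]{ES} (primeness of $pM$) and \cite[Theorem 2.3]{YF1} (an intersection of two prime submodules is $2$-absorbing). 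You avoid that structure theorem entirely: you reprove \cite[Theorem 2.2]{Bt} (that the radical of a $2$-absorbing primary ideal is $2$-absorbing) by the $a^{n}b^{n}c^{n}$ trick, and then transfer the ideal-level property to $P=\sqrt{(N:_{R}M)}\,M$ using only $Rm=(Rm:_{R}M)M$, Lemma \ref{finmul}, and the Anderson--Badawi equivalence of $2$-absorbing with strongly $2$-absorbing from \cite{AB1}. In effect you establish the $2$-absorbing analogue of Theorem \ref{conve}: if $M$ is a finitely generated multiplication module and $(P:_{R}M)$ is a $2$-absorbing ideal, then $P$ is a $2$-absorbing submodule --- a more self-contained and slightly more general argument, whereas the paper's route yields the extra structural description of $\sqrt{(N:_{R}M)}$ as a by-product, and you are right that faithfulness (hence the cancellation identity) is nowhere needed. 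One small repair: your properness argument $P\subseteq\mathfrak{m}M\subsetneq M$ needs the observation that $\mathfrak{m}M\neq M$, which does hold here since $\mathfrak{m}\supseteq\mathrm{ann}(M)$ and $M$ is finitely generated, but it is more direct to note that $(P:_{R}M)=\sqrt{(N:_{R}M)}\neq R$ by Lemma \ref{finmul}, which already forces $P\neq M$.
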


\begin{proof}
$(1)$ Let $a,b,c\in R$ be such that $abc\in (N:_{R}M)$, $ac\not\in \sqrt{%
(N:_{R}M)}$ and $bc\not\in \sqrt{(N:_{R}M)}$. Since, by Lemma \ref{finmul}, $%
\sqrt{(N:_{R}M)}=(M$-$\mathrm{rad}(N):_{R}M)$, there exist $m_{1},m_{2}\in M$
such that $acm_{1}\not\in M$-$\mathrm{rad}(N)$ and $bcm_{2}\not\in M$-$%
\mathrm{rad}(N)$. But $ab(cm_{1}+cm_{2})\in N$, because $abc\in (N:_{R}M)$.
So $a(cm_{1}+cm_{2})\in M$-$\mathrm{rad}(N)$ or $b(cm_{1}+cm_{2})\in M$-$%
\mathrm{rad}(N)$ or $ab\in (N:_{R}M)$, since $N$ is $2$-absorbing primary.
If $ab\in (N:_{R}M)$, then we are done. Thus assume that $%
a(cm_{1}+cm_{2})\in M$-$\mathrm{rad}(N)$. So $acm_{2}\not\in M$-$\mathrm{rad}%
(N)$, because $acm_{1}\not\in M$-$\mathrm{rad}(N)$. Therefore $ab\in
(N:_{R}M)$, since $N$ is $2$-absorbing primary and $abcm_{2}\in N$.
Similarly if $b(cm_{1}+cm_{2})\in M$-$\mathrm{rad}(N)$, then $ab\in (N:_{R}M)
$. Consequently $(N:_{R}M)$ is a $2$-absorbing primary ideal.\newline
$(2)$ By \textrm{\cite[Theorem 2.3]{Bt}} we have two cases.

\textbf{Case 1}. $\sqrt{(N:_{R}M)}=p$ is a prime ideal of $R$. Since $M$ is
a multiplication module, $M$-$\mathrm{rad}(N)=\sqrt{(N:_{R}M)}M=pM$, where $%
pM$ is a prime submodule of $M$ by \textrm{\cite[Corollary 2.11]{ES}}. Hence
in this case $M$-$\mathrm{rad}(N)$ is a $2$-absorbing submodule of $M$.

\textbf{Case 2}. $\sqrt{(N:_{R}M)}=p_{1}\cap p_{2}$, where $p_{1},~p_{2}$
are distinct prime ideals of $R$ that are minimal over $(N:_{R}M)$. In this
case, we have $M$-$\mathrm{rad}(N)=\sqrt{(N:_{R}M)}M=(p_{1}\cap
p_{2})M=([p_{1}+\mathrm{ann}M]\cap[p_{2}+\mathrm{ann}M])M=p_{1}M \cap p_{2}M$%
, where $p_{1}M$, $p_{2}M$ are prime submodules of $M$ by \textrm{\cite[%
Corollary 2.11, 1.7]{ES}}. Consequently, $M$-$\mathrm{rad}(N)$ is a $2$%
-absorbing submodule of $M$ by \textrm{\cite[Theorem 2.3]{YF1}}.
\end{proof}

\begin{theorem}
\label{rad} Let $M$ be a (resp. finitely generated multiplication) $R$%
-module and $N$ be a submodule of $M$. If $M$-$rad(N)$ is a (resp. primary)
prime submodule of $M$, then $N$ is a 2-absorbing primary submodule of $M$.
\end{theorem}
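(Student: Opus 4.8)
The statement bundles two claims: a general one (if $M$-$\mathrm{rad}(N)$ is prime then $N$ is $2$-absorbing primary) and one for finitely generated multiplication modules (if $M$-$\mathrm{rad}(N)$ is primary then $N$ is $2$-absorbing primary). The plan is to prove the general prime case first and then obtain the multiplication case for free: when $M$ is finitely generated and multiplication, Proposition \ref{finmul2} tells us that $M$-$\mathrm{rad}(N)$ is primary if and only if it is prime, so the primary hypothesis collapses to the prime hypothesis and the general result applies verbatim.

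For the prime case, write $P=M$-$\mathrm{rad}(N)$ and $\mathfrak{p}=(P:_{R}M)$. Since $P$ is prime it is proper, so $N\subseteq P$ forces $N$ to be proper, and $\mathfrak{p}$ is a prime ideal of $R$ (as recalled in the Preliminaries). The one fact I would isolate at the outset is the biconditional form of the prime-submodule condition: for $r\in R$ and $x\in M$ one has $rx\in P$ if and only if $x\in P$ or $r\in\mathfrak{p}$. The forward direction is the definition of a prime submodule; the reverse holds because $P$ is a submodule (so $x\in P\Rightarrow rx\in P$) and because $r\in\mathfrak{p}=(P:_{R}M)$ means $rM\subseteq P$.

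With this in hand I would verify the definition directly, assuming both radical alternatives fail. Suppose $a,b\in R$ and $m\in M$ satisfy $abm\in N$, and suppose further that $am\notin P$ and $bm\notin P$; the goal is the third alternative $ab\in(N:_{R}M)$, but in fact I expect to derive an outright contradiction. From $am\notin P$ the biconditional gives $m\notin P$ and $a\notin\mathfrak{p}$; likewise $bm\notin P$ gives $b\notin\mathfrak{p}$. On the other hand $abm\in N\subseteq P$, so applying the biconditional to $(ab)m\in P$ together with $m\notin P$ yields $ab\in\mathfrak{p}$. Since $\mathfrak{p}$ is prime and $a,b\notin\mathfrak{p}$, this is impossible. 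Hence $am\in P$ or $bm\in P$, which is exactly the $2$-absorbing primary condition.

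The step I expect to require the most care is conceptual rather than computational: recognizing that the disjunct $ab\in(N:_{R}M)$ in the definition is never actually invoked, so that the primeness of $P$ alone already forces one of the two radical conditions. Getting the biconditional characterization of membership in $P$ exactly right is what makes the contradiction clean; once that is in place, the remaining bookkeeping---the reduction from the primary case via Proposition \ref{finmul2} and the observation that $N\subseteq M$-$\mathrm{rad}(N)$ guarantees properness---is routine.
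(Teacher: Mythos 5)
Your proof is correct and takes essentially the same approach as the paper: both reduce the primary case to the prime case via Proposition \ref{finmul2}, and both use primeness of $M$-$\mathrm{rad}(N)$ on $abm\in N\subseteq M$-$\mathrm{rad}(N)$ to force $am$ or $bm$ into the radical, so the disjunct $ab\in(N:_{R}M)$ is never invoked. The only cosmetic difference is that the paper argues directly, grouping $abm$ as $b(am)$ and applying the prime-submodule property once to get $b\in(M$-$\mathrm{rad}(N):_{R}M)$, whereas you argue by contradiction through the induced prime ideal $(M$-$\mathrm{rad}(N):_{R}M)$ of $R$.
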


\begin{proof}
Suppose that $M$-$\mathrm{rad}(N)$ is a prime submodule of $M$. Let $a,b\in R
$ and $m\in M$ be such that $abm\in N$, $am\not\in M$-$\mathrm{rad}(N)$.
Since $M$-$\mathrm{rad}(N)$ is a prime submodule and $abm\in M$-$\mathrm{rad}%
(N)$, then $b\in (M$-$\mathrm{rad}(N):_{R}M)$. So $bm\in M$-$\mathrm{rad}(N)$%
. Consequently $N$ is a $2$-absorbing primary submodule of $M$. Now assume
that $M$ is a finitely generated multiplication module and $M$-$\mathrm{rad}%
(N)$ is a primary submodule of $M$, then $M$-$\mathrm{rad}(N)$ is a prime
submodule of $M$, by Proposition \ref{finmul2}. Therefore $N$ is 2-absorbing
primary.
\end{proof}

In \textrm{\cite[Theorem 1(3)]{Al}}, it was shown that for any faithful
multiplication module $M$ not necessary finitely generated, $M$-$\mathrm{rad}%
(IM)=\sqrt{I}M$ for any ideal $I$ of $R$.

\begin{theorem}
Let $M$ be a (resp. finitely generated faithful multiplication) faithful
multiplication $R$-module. If $M$-${rad}(N)$ is a (resp. primary) prime
submodule of $M$, then $N^{n}$ is a 2-absorbing primary submodule of $M$ for
every positive integer $n\geq1$.
\end{theorem}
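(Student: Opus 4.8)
The plan is to show that taking powers leaves the $M$-radical unchanged, and then to quote Theorem \ref{rad} applied to the submodule $N^{n}$ in place of $N$. Since $M$ is a multiplication module, I would begin by fixing a presentation $N=IM$ with $I$ an ideal of $R$. By the definition of the product of submodules through presentation ideals, the $n$-fold product is $N^{n}=I^{n}M$, and this is independent of the chosen presentation by \cite[Theorem 3.4]{Am}.

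The heart of the argument is the computation of $M$-$\mathrm{rad}(N^{n})$. Because $M$ is faithful multiplication, the formula quoted just before the theorem, namely \cite[Theorem 1(3)]{Al}, applies to the ideal $I^{n}$ and gives $M$-$\mathrm{rad}(I^{n}M)=\sqrt{I^{n}}\,M$. Using $\sqrt{I^{n}}=\sqrt{I}$, which holds for every $n\geq1$, I obtain
\[
M\text{-}\mathrm{rad}(N^{n})=\sqrt{I^{n}}\,M=\sqrt{I}\,M=M\text{-}\mathrm{rad}(IM)=M\text{-}\mathrm{rad}(N).
\]
Thus the $M$-radical is insensitive to powers. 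In particular $M$-$\mathrm{rad}(N^{n})$ inherits whatever separation property $M$-$\mathrm{rad}(N)$ enjoys, and since $M$-$\mathrm{rad}(N)$ is a proper (prime, respectively primary) submodule we also have $N^{n}\subseteq M$-$\mathrm{rad}(N^{n})\subsetneq M$, so $N^{n}$ is a proper submodule.

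It then remains to invoke Theorem \ref{rad} with $N^{n}$ in place of $N$. For the non-parenthetical statement, $M$-$\mathrm{rad}(N^{n})=M$-$\mathrm{rad}(N)$ is prime and $M$ is a faithful multiplication module, so Theorem \ref{rad} yields that $N^{n}$ is $2$-absorbing primary. For the parenthetical statement, $M$ is finitely generated faithful multiplication (hence finitely generated multiplication) and $M$-$\mathrm{rad}(N^{n})=M$-$\mathrm{rad}(N)$ is primary, so the parenthetical case of Theorem \ref{rad} again gives that $N^{n}$ is $2$-absorbing primary; alternatively, Proposition \ref{finmul2} shows this primary radical is in fact prime, reducing the second case to the first.

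The only step requiring real attention is the radical computation: one must confirm both that $N^{n}=I^{n}M$ does not depend on the presentation $I$ and that the faithful-multiplication radical formula is legitimately applied with $I$ replaced by $I^{n}$. These are guaranteed by \cite[Theorem 3.4]{Am} and \cite[Theorem 1(3)]{Al} respectively, so beyond this bookkeeping I expect no genuine obstacle to arise.
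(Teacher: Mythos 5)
Your proposal is correct and follows essentially the same route as the paper: present $N=IM$, compute $M$-$\mathrm{rad}(N^{n})=\sqrt{I^{n}}\,M=\sqrt{I}\,M=M$-$\mathrm{rad}(N)$ via \cite[Theorem 1(3)]{Al}, and then apply Theorem \ref{rad} to $N^{n}$ in both the prime and primary cases. Your added checks (independence of the presentation via \cite[Theorem 3.4]{Am} and properness of $N^{n}$) are details the paper leaves implicit, and they are handled correctly.
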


\begin{proof}
Assume that $M$ is a $($resp. finitely generated faithful multiplication$)$
faithful multiplication module and $M$-$\mathrm{rad}(N)$ is a $($resp.
primary$)$ prime submodule of $M$. There exists an ideal $I$ of $R$ such
that $N=IM$. Thus 
\begin{equation*}
M-\mathrm{rad}(N^{n})=\sqrt{I^{n}}M=M-\mathrm{rad}(N),
\end{equation*}

which is a $($resp. primary$)$ prime submodule of $M$. Hence for every
positive integer $n\geq1$, $N^{n}$ is a $2$-absorbing primary submodule of $M
$, by Theorem \ref{rad}.
\end{proof}

Recall that a commutative ring $R$ with $1\neq0$ is called a divided ring if
for every prime ideal $p$ of $R$, we have $p\subseteq xR$ for every $x\in
R\backslash p$. Generalizing this idea to modules we say that an $R$-module $%
M$ is divided if for every prime submodule $N$ of $M$, $N\subseteq Rm$ for
all $m\in M\backslash N$.

\begin{theorem}
If $M$ is a divided $R$-module, then every proper submodule of $M$ is a
2-absorbing primary submodule of $M$. In particular, every proper submodule
of a chained module is a 2-absorbing primary submodule.
\end{theorem}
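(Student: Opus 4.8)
The plan is to verify the $2$-absorbing primary condition directly, and in fact to establish the stronger statement that whenever $a,b\in R$ and $m\in M$ satisfy $abm\in N$, at least one of $am,bm$ already lies in $M$-$\mathrm{rad}(N)$ (so the third alternative $ab\in(N:_{R}M)$ never has to be invoked). First I would dispose of the degenerate case: if $M$ has no prime submodule containing $N$, then by definition $M$-$\mathrm{rad}(N)=M$ and the condition holds trivially. So I may assume the set of prime submodules of $M$ containing $N$ is nonempty, whence $M$-$\mathrm{rad}(N)$ is their intersection.

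The key structural input is that in a divided module the prime submodules are totally ordered by inclusion. To see this, let $P$ and $Q$ be prime submodules with $P\not\subseteq Q$ and pick $x\in P\setminus Q$. Applying the divided hypothesis to the prime submodule $Q$ and the element $x\in M\setminus Q$ gives $Q\subseteq Rx$, and since $x\in P$ we have $Rx\subseteq P$; hence $Q\subseteq P$. Thus any two prime submodules are comparable.

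For the main argument I would argue by contradiction, assuming $am\notin M$-$\mathrm{rad}(N)$ and $bm\notin M$-$\mathrm{rad}(N)$. Then there is a prime submodule $P\supseteq N$ with $am\notin P$ and a prime submodule $Q\supseteq N$ with $bm\notin Q$. By the comparability just established I may assume $P\subseteq Q$ (the case $Q\subseteq P$ being symmetric). Now $b(am)=abm\in N\subseteq P$ with $am\notin P$, so primeness of $P$ forces $b\in(P:_{R}M)$, i.e.\ $bM\subseteq P\subseteq Q$; in particular $bm\in Q$, contradicting $bm\notin Q$. In the symmetric case $a(bm)=abm\in N\subseteq Q$ with $bm\notin Q$ forces $aM\subseteq Q\subseteq P$, so $am\in P$, again a contradiction. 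Hence $am\in M$-$\mathrm{rad}(N)$ or $bm\in M$-$\mathrm{rad}(N)$, and $N$ is $2$-absorbing primary.

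Finally, the ``in particular'' clause follows once I observe that every chained module is divided: if $N'$ is a prime submodule and $m\in M\setminus N'$, then the submodules $Rm$ and $N'$ are comparable, and $Rm\subseteq N'$ would give $m\in N'$, so $N'\subseteq Rm$. I expect the only real content to be the total-ordering lemma for prime submodules; after that the proof is a two-case application of the definition of a prime submodule, with the mild care needed to separate off the case $M$-$\mathrm{rad}(N)=M$.
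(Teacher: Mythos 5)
Your proof is correct, and it is a genuine (if modest) variant of the paper's route. The paper's own proof is two lines: it asserts without argument that the prime submodules of a divided module are linearly ordered, concludes from this that $M$-$\mathrm{rad}(N)$ is itself a prime submodule, and then invokes Theorem \ref{rad}. Your proof rests on the same key lemma --- comparability of prime submodules --- but you actually prove it (the $Q\subseteq Rx\subseteq P$ argument is exactly right), and instead of passing through the intermediate claim that the radical is prime, you run the ``primeness pushes the scalar into the colon ideal'' argument directly with two comparable primes $P$ and $Q$ witnessing $am\notin M$-$\mathrm{rad}(N)$ and $bm\notin M$-$\mathrm{rad}(N)$. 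This buys you two things the paper glosses over: you never need the (true but unstated) fact that the intersection of a nonempty chain of prime submodules is prime, and you explicitly dispose of the degenerate case where no prime submodule contains $N$, in which $M$-$\mathrm{rad}(N)=M$ is not prime and the paper's appeal to Theorem \ref{rad} does not literally apply, though the conclusion is trivial. What the paper's route buys in exchange is brevity, reuse of Theorem \ref{rad}, and the strictly stronger intermediate fact that $M$-$\mathrm{rad}(N)$ is prime. You also supply the verification that a chained module is divided, which the paper leaves implicit; your one-line argument for it is correct.
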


\begin{proof}
Let $N$ be a proper submodule of $M$. Since the prime submodules of a
divided module are linearly ordered, we conclude that $M$-$\mathrm{rad}(N)$
is a prime submodule of $M$. Hence $N$ is a $2$-absorbing primary submodule
of $M$ by Theorem \ref{rad}.
\end{proof}

\begin{remark}
\label{fg1} Let $I=(0:_{R}M)$ and $R^{\prime}=R/I$. It is easy to see that $%
N $ is a $2$-absorbing primary $R$-submodule of $M$ if and only if $N$ is a $%
2$-absorbing primary $R^{\prime}$-submodule of $M$. Also, $(N:_{R}M)$ is a $%
2 $-absorbing primary ideal of $R$ if and only if $(N:_{R^{\prime}}M)$ is a $%
2$-absorbing primary ideal of $R^{\prime}$.
\end{remark}




\begin{theorem}
Let $S$ be a multiplicatively closed subset of $R$ and $M$ be an $R$-module.
If $N$ is a 2-absorbing primary submodule of $M$ and $S^{-1}N\not=S^{-1}M$,
then $S^{-1}N$ is a 2-absorbing primary submodule of $S^{-1}M$. 
\end{theorem}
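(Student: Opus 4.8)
The plan is to verify the definition directly, transporting the defining property of $N$ across the localization map; the one ingredient that is not purely formal is the compatibility of localization with the $M$-radical, which I would isolate first. Specifically, I would record the inclusion
\[
S^{-1}\big(M\text{-}\mathrm{rad}(N)\big)\subseteq (S^{-1}M)\text{-}\mathrm{rad}(S^{-1}N).
\]
To prove it, let $Q$ be any prime submodule of $S^{-1}M$ with $S^{-1}N\subseteq Q$, and let $Q^{c}=\{x\in M : x/1\in Q\}$ be its contraction. A routine check shows $Q^{c}$ is a prime submodule of $M$: it is proper because $Q\neq S^{-1}M$, and if $rx\in Q^{c}$ then $\tfrac{r}{1}\tfrac{x}{1}\in Q$, forcing $x\in Q^{c}$ or $\tfrac{r}{1}\in(Q:_{S^{-1}R}S^{-1}M)$, i.e.\ $r\in(Q^{c}:_{R}M)$. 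Moreover $N\subseteq Q^{c}$, so $M\text{-}\mathrm{rad}(N)\subseteq Q^{c}$, whence any $x\in M\text{-}\mathrm{rad}(N)$ satisfies $x/1\in Q$ and thus $x/s\in Q$ for all $s\in S$. Intersecting over all such $Q$ gives the displayed inclusion (if no such $Q$ exists the right-hand side is $S^{-1}M$ and there is nothing to prove).

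With this in hand the main computation is short. Since $S^{-1}N\neq S^{-1}M$ by hypothesis, $S^{-1}N$ is proper. Take $\tfrac{a}{s},\tfrac{b}{t}\in S^{-1}R$ and $\tfrac{m}{u}\in S^{-1}M$ with $\tfrac{a}{s}\tfrac{b}{t}\tfrac{m}{u}\in S^{-1}N$. Clearing denominators, there is some $v\in S$ with $(va)bm\in N$; setting $a'=va$ this reads $a'bm\in N$. Applying the hypothesis that $N$ is a $2$-absorbing primary submodule of $M$ yields three cases: $a'b\in(N:_{R}M)$, or $a'm\in M\text{-}\mathrm{rad}(N)$, or $bm\in M\text{-}\mathrm{rad}(N)$.

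I would then push each case forward. If $a'b=vab\in(N:_{R}M)$, then $\tfrac{a}{s}\tfrac{b}{t}=\tfrac{vab}{vst}\in S^{-1}(N:_{R}M)\subseteq(S^{-1}N:_{S^{-1}R}S^{-1}M)$, using the standard fact that $rM\subseteq N$ implies $\tfrac{r}{w}S^{-1}M\subseteq S^{-1}N$. If $a'm=vam\in M\text{-}\mathrm{rad}(N)$, then $\tfrac{a}{s}\tfrac{m}{u}=\tfrac{vam}{vsu}\in S^{-1}\big(M\text{-}\mathrm{rad}(N)\big)\subseteq (S^{-1}M)\text{-}\mathrm{rad}(S^{-1}N)$ by the inclusion above; the case $bm\in M\text{-}\mathrm{rad}(N)$ is identical with $b,t$ replacing $a,s$. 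In every case one of the three defining conditions holds for $S^{-1}N$, so $S^{-1}N$ is a $2$-absorbing primary submodule of $S^{-1}M$.

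The main obstacle is the compatibility inclusion for the $M$-radical; the element-chasing in the three cases is then entirely formal, amounting to clearing and reintroducing denominators. I expect the only place requiring genuine care to be the verification that the contraction $Q^{c}$ is a prime submodule of $M$ containing $N$, since that is what allows the $M$-radical to be controlled after localization.
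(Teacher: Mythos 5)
Your proposal is correct and follows essentially the same route as the paper's proof: clear denominators to get $vabm\in N$, apply the $2$-absorbing primary hypothesis to the resulting relation (you absorb $v$ into the ring element where the paper absorbs it into the module element, an immaterial difference), and push each of the three conclusions forward via $S^{-1}(N:_{R}M)\subseteq(S^{-1}N:_{S^{-1}R}S^{-1}M)$ and $S^{-1}\big(M\text{-}\mathrm{rad}(N)\big)\subseteq (S^{-1}M)\text{-}\mathrm{rad}(S^{-1}N)$. The only substantive difference is that you actually prove the radical inclusion, correctly, by contracting prime submodules of $S^{-1}M$, whereas the paper asserts it without proof.
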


\begin{proof}
If $\frac{a_{1}}{s_{1}}\frac{a_{2}}{s_{2}}\frac{m}{s}\in S^{-1}N$, then $%
ua_{1}a_{2}m\in N$ for some $u\in S$. It follows that $ua_{1}m\in M$-$%
\mathrm{rad}(N)$ or $ua_{2}m\in M$-$\mathrm{rad}(N)$ or $a_{1}a_{2}\in
(N:_{R}M)$, so we conclude that $\frac{a_{1}}{s_{1}}\frac{m}{s}=\frac{ua_{1}m%
}{us_{1}s}\in S^{-1}(M$-$\mathrm{rad}(N))\subseteq S^{-1}M$-$\mathrm{rad}%
(S^{-1}N)$ or $\frac{a_{2}}{s_{2}}\frac{m}{s}=\frac{ua_{2}m}{us_{2}s}\in
S^{-1}M$-$\mathrm{rad}(S^{-1}N)$ or $\frac{a_{1}}{s_{1}}\frac{a_{2}}{s_{2}}=%
\frac{a_{1}a_{2}}{s_{1}s_{2}}\in
S^{-1}(N:_{R}M)\subseteq(S^{-1}N:_{S^{-1}R}S^{-1}M)$.\newline
\end{proof}





\begin{theorem}
\label{absfait} Let $I$ be a 2-absorbing primary ideal of a ring $R$ and $M$
a faithful multiplication $R$-module such that ${Ass}_{R}(M/{\sqrt{I}M})$ is
a totally ordered set. Then $abm\in IM$ implies that $am\in\sqrt{I}M$ or $%
bm\in\sqrt{I}M$ or $ab\in I$ whenever $a,b\in R$ and $m\in M$.
\end{theorem}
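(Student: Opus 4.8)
The plan is to reduce everything to the structure theorem for $\sqrt{I}$. Since $M$ is faithful multiplication, $M$-$\mathrm{rad}(IM)=\sqrt{I}M$ by \cite[Theorem 1(3)]{Al} quoted above, so the desired conclusion ``$am\in\sqrt{I}M$ or $bm\in\sqrt{I}M$ or $ab\in I$'' is just the statement that $IM$ behaves like a $2$-absorbing primary submodule, with $\sqrt{I}M$ playing the role of the $M$-radical. I would in fact aim at the slightly stronger conclusion that $am\in\sqrt{I}M$ or $bm\in\sqrt{I}M$ always holds. By \cite[Theorem 2.3]{Bt} there are two cases: either $\sqrt{I}=P$ is prime, or $\sqrt{I}=P_{1}\cap P_{2}$ with $P_{1},P_{2}$ the two distinct primes minimal over $I$.

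In the first case $\sqrt{I}M=PM$ is a prime submodule of $M$ with $(PM:_{R}M)=P$ by \cite[Corollary 2.11]{ES}. Given $abm\in IM\subseteq PM$, if $m\in PM$ we are done; otherwise the image of $m$ in $M/PM$ is nonzero and killed by $ab$, so primeness forces $ab\in(PM:_{R}M)=P$, and since $P$ is prime $a\in P$ or $b\in P$, whence $am\in PM=\sqrt{I}M$ or $bm\in PM=\sqrt{I}M$. This case uses neither the hypothesis on associated primes nor $ab\notin I$.

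The second case is the heart of the matter. First I would record, exactly as in the proof of Theorem \ref{m-rad}(2), that $\sqrt{I}M=(P_{1}\cap P_{2})M=P_{1}M\cap P_{2}M$ and that each $P_{i}M$ is a prime submodule with residual $P_{i}$. This identifies $\bar{M}:=M/\sqrt{I}M$ with a submodule of $M/P_{1}M\oplus M/P_{2}M$ via the diagonal map, and shows that for $r\in R$ one has $r\bar{m}=0$ in $\bar{M}$ exactly when ($r\in P_{1}$ or $m\in P_{1}M$) and ($r\in P_{2}$ or $m\in P_{2}M$). Writing $[m]_{i}$ for the image of $m$ in $M/P_{i}M$, I split according to how many of $[m]_{1},[m]_{2}$ vanish. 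If both vanish then $m\in\sqrt{I}M$ and we are done. If exactly one vanishes, say $m\in P_{2}M$ but $[m]_{1}\neq0$, then $ab[m]_{1}=0$ together with the primeness of $P_{1}M$ gives $ab\in P_{1}$, hence $a\in P_{1}$ or $b\in P_{1}$; combining $a\in P_{1}$ (resp. $b\in P_{1}$) with $m\in P_{2}M$ yields $am\in P_{1}M\cap P_{2}M=\sqrt{I}M$ (resp. $bm\in\sqrt{I}M$).

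The remaining configuration, $[m]_{1}\neq0$ and $[m]_{2}\neq0$, is the one I expect to be the main obstacle, and it is precisely here that the hypothesis on $\mathrm{Ass}_{R}(\bar{M})$ enters. In this configuration the description of annihilators above gives $\mathrm{ann}_{R}(\bar{m})=P_{1}\cap P_{2}$, so the cyclic submodule $R\bar{m}\cong R/(P_{1}\cap P_{2})$ sits inside $\bar{M}$. Since $P_{1},P_{2}$ are distinct, hence incomparable, minimal primes of $P_{1}\cap P_{2}$, choosing elements of $P_{2}\setminus P_{1}$ and of $P_{1}\setminus P_{2}$ exhibits both $P_{1}$ and $P_{2}$ as annihilators of nonzero elements of $R/(P_{1}\cap P_{2})$, so $\{P_{1},P_{2}\}\subseteq\mathrm{Ass}_{R}(R\bar{m})\subseteq\mathrm{Ass}_{R}(\bar{M})$. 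As $P_{1}$ and $P_{2}$ are incomparable this contradicts the assumption that $\mathrm{Ass}_{R}(\bar{M})$ is totally ordered, so this configuration cannot occur. I would emphasize that this last step is elementary, requiring no Noetherian hypothesis to see that the minimal primes of $P_{1}\cap P_{2}$ are associated to $R/(P_{1}\cap P_{2})$, which is exactly what makes it usable for an arbitrary (possibly non-finitely-generated) faithful multiplication module. Assembling the cases then completes the proof.
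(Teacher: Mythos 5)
Your proposal is correct in substance, but it is a genuinely different argument from the paper's, and it even yields something slightly stronger. The paper argues by contradiction at a single maximal ideal: assuming $(\sqrt{I}M:_{R}am)$ and $(\sqrt{I}M:_{R}bm)$ are proper, it uses the totally ordered hypothesis to place both colon ideals inside one maximal ideal $\mathfrak{m}$, then invokes the local cyclicity of multiplication modules (\cite[Theorem 1.2]{ES}: $(1-x)M\subseteq Rm^{\prime}$ for some $x\in\mathfrak{m}$), cancels using faithfulness, and element-chases with the $2$-absorbing primary property of $I$ and the $2$-absorbingness of $\sqrt{I}$ (\cite[Theorem 2.2]{Bt}) until it lands on $ab\in I$. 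You instead exploit the dichotomy of \cite[Theorem 2.3]{Bt} for $\sqrt{I}$ and use the hypothesis on $\mathrm{Ass}_{R}(M/\sqrt{I}M)$ exactly once, to exclude the configuration $\mathrm{ann}_{R}(\bar{m})=P_{1}\cap P_{2}$; your computation that the two incomparable minimal primes are then both honest annihilators of elements of $R\bar{m}$ is correct, requires no Noetherian hypothesis, and is robust under either the classical or the weak notion of associated prime. As a by-product you obtain the sharper conclusion that $am\in\sqrt{I}M$ or $bm\in\sqrt{I}M$ always holds, which the paper's proof does not give (its hardest case terminates in $ab\in I$). One point does need repair: you justify ``each $P_{i}M$ is a prime submodule with residual $P_{i}$'' by pointing to the proof of Theorem \ref{m-rad}(2), but that theorem concerns \emph{finitely generated} multiplication modules, whereas here $M$ need not be finitely generated. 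What you actually need is the general fact that for a faithful multiplication module $M$ and a prime $P$ with $PM\neq M$, the submodule $PM$ is prime with $(PM:_{R}M)=P$: primeness is \cite[Corollary 2.11]{ES}, valid for arbitrary multiplication modules, and exactness of the residual follows by a short argument from \cite[Theorem 1.2]{ES} together with faithfulness; note also that $P_{i}M\neq M$ holds automatically wherever you invoke primeness, since in those configurations some element of $M$ lies outside $P_{i}M$ (and when $PM=M$ your conclusion is vacuously true). Similarly, the identity $(P_{1}\cap P_{2})M=P_{1}M\cap P_{2}M$ should be cited to \cite[Corollary 1.7]{ES} plus faithfulness, exactly as the paper itself does in Proposition \ref{sqrt}. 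With these citations substituted, your argument is complete.
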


\begin{proof}
Let $a,b\in R$, $m\in M$ and $abm\in IM$. If $(\sqrt{I}M:_{R}am)=R$ or $(%
\sqrt{I}M:_{R}bm)=R$, we are done. Suppose that $(\sqrt{I}M:_{R}am)$ and $(%
\sqrt{I}M:_{R}bm)$ are proper ideals of $R$. Since $\mathrm{Ass}_{R}(M/{%
\sqrt{I}M})$ is a totally ordered set, $(\sqrt{I}M:_{R}am)\cup (\sqrt{I}%
M:_{R}bm)$ is an ideal of $R$, and so there is a maximal ideal $\mathfrak{m}$
such that $(\sqrt{I}M:_{R}am)\cup (\sqrt{I}M:_{R}bm)\subseteq \mathfrak{m}$.
We have $am\notin \mathrm{T}_{\mathfrak{m}}(M):=\{m^{\prime }\in
M:(1-x)m^{\prime }=0$, for some $x\in \mathfrak{m}\}$, since $am\in \mathrm{T%
}_{\mathfrak{m}}(M)$ implies that $(1-x)am=0$ for some $x\in \mathfrak{m}$,
thus $(1-x)am\in \sqrt{I}M$ and so $1-x\in (\sqrt{I}M:_{R}am)\subseteq 
\mathfrak{m}$, a contradiction. So by \textrm{\cite[Theorem 1.2]{ES}}, there
are $x\in \mathfrak{m}$ and $m^{\prime }\in M$ such that $(1-x)M\subseteq
Rm^{\prime }$. Thus, $(1-x)m=rm^{\prime }$ some $r\in R$. Moreover, $%
(1-x)abm=sm^{\prime }$ for some $s\in I$, because $abm\in IM$. Hence $%
(abr-s)m^{\prime }=0$ and so $(1-x)(abr-s)M\subseteq (abr-s)Rm^{\prime }=0$.
Thus $(1-x)(abr-s)=0$, because $M$ is faithful. Therefore, $%
(1-x)abr=(1-x)s\in I$. Then $(1-x)ar\in \sqrt{I}$ or $(1-x)b\in \sqrt{I}$ or 
$abr\in I$, since $I$ is 2-absorbing primary.\newline
If $(1-x)ar\in \sqrt{I}$, then $(1-x)a\in \sqrt{I}$ or $(1-x)r\in \sqrt{I}$
or $ar\in \sqrt{I}$, because by \textrm{\cite[Theorem 2.2]{Bt}} $\sqrt{I}$
is a $2$-absorbing ideal of $R$. If $(1-x)a\in \sqrt{I}$, then $(1-x)am\in 
\sqrt{I}M$ and so $1-x\in (\sqrt{I}M:_{R}am)\subseteq \mathfrak{m}$ that is
a contradiction. If $(1-x)r\in \sqrt{I}$, then $(1-x)^{2}m=(1-x)rm^{\prime
}\in \sqrt{I}M$ which implies that $(1-x)^{2}\in (\sqrt{I}M:_{R}m)\subseteq (%
\sqrt{I}M:_{R}am)\subseteq \mathfrak{m}$, a contradiction. Similarly we can
see that $(1-x)b\not\in \sqrt{I}$. Now, $ar\in \sqrt{I}$ implies that $%
(1-x)am=arm^{\prime }\in \sqrt{I}M$ and so $1-x\in (\sqrt{I}%
M:_{R}am)\subseteq \mathfrak{m}$ which is a contradiction.\newline
If $arb\in I$, then $ar\in \sqrt{I}$ or $br\in \sqrt{I}$ or $ab\in I$ which
the first two cases are impossible, thus $ab\in I$.
\end{proof}

Let $R$ be a ring with the total quotient ring $K$. A nonzero ideal $I$ of $R
$ is said to be \textit{invertible} if $II^{-1}=R$, where $I^{-1}=\{x\in
K\mid xI\subseteq R\}$. The concept of an invertible submodule was
introduced in \cite{NA} as a generalization of the concept of an invertible
ideal. Let $M$ be an $R$-module and let $S=R \backslash\{0\}$. Then $T=\{t
\in S\mid tm=0 \mbox{ for some } m\in M \mbox{ implies } m=0\}$ is a
multiplicatively closed subset of $R$. Let $N$ be a submodule of $M$ and $%
N^{\prime}=\{x\in R_{T}\mid xN\subseteq M\}$. A submodule $N$ is said to be 
\textit{invertible in} $M$, if $N^{\prime}N=M$, \cite{NA}. A nonzero $R$%
-module $M$ is called \textit{Dedekind} provided that each nonzero submodule
of $M$ is invertible.

We recall from \cite{Kh} that, a finitely generated torsion-free
multiplication module $M$ over a domain $R$ is a Dedekind module if and only
if $R$ is a Dedekind domain. 

\begin{theorem}
Let $R$ be a Noetherian domain, $M$ a torsion-free multiplication $R$%
-module. Then the following statements are equivalent:

\begin{enumerate}
\item $M$ is a Dedekind module;

\item If $N$ is a nonzero 2-absorbing primary submodule of $M$, then either $%
N=\mathfrak{M}^{n}$ for some maximal submodule $\mathfrak{M}$ of $M$ and
some positive integer $n\geq1$ or $N=\mathfrak{M}_{1}^{n}\mathfrak{M}_{2}^{m}
$ for some maximal submodules $\mathfrak{M}_{1}$ and $\mathfrak{M}_{2}$ of $M
$ and some positive integers $n,m\geq1$;

\item If $N$ is a nonzero 2-absorbing primary submodule of $M$, then either $%
N=P^{n}$ for some prime submodule $P$ of $M$ and some positive integer $%
n\geq1$ or $N=P_{1}^{n}P_{2}^{m}$ for some prime submodules $P_{1}$ and $%
P_{2}$ of $M$ and some positive integers $n,m\geq1$.
\end{enumerate}
\end{theorem}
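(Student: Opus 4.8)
The plan is to prove the cycle $(1)\Rightarrow(2)\Rightarrow(3)\Rightarrow(1)$ and, throughout, to transport statements between submodules of $M$ and ideals of $R$ via the correspondence $N\mapsto(N:_{R}M)$, $I\mapsto IM$. Since $M$ is torsion-free over the domain $R$ it is faithful ($\mathrm{ann}_{R}(M)=0$); I use throughout that $M$ is a finitely generated faithful multiplication module, hence a cancellation module, which is exactly what lets me invoke \cite{Kh}, identify submodules with ideals via $(IM:_{R}M)=I$, and use that $\mathfrak m M$ (resp. $pM$) runs over the maximal (resp. prime) submodules together with $(p_{1}M)^{n}(p_{2}M)^{m}=p_{1}^{n}p_{2}^{m}M$. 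Under this dictionary the statement becomes the ideal-theoretic fact that a Noetherian domain is Dedekind iff every nonzero $2$-absorbing primary ideal is $p^{n}$ or $p_{1}^{n}p_{2}^{m}$, which I will prove directly in module form.

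For $(1)\Rightarrow(2)$ I would first invoke \cite{Kh}: $M$ Dedekind forces $R$ to be a Dedekind domain. Let $N$ be a nonzero $2$-absorbing primary submodule and set $I=(N:_{R}M)$, a nonzero $2$-absorbing primary ideal by Theorem \ref{m-rad}(1), with $N=IM$. Writing $I=\prod_{i}p_{i}^{e_{i}}$ with the $p_{i}$ distinct nonzero (hence maximal) primes, the key point is that at most two $p_{i}$ can occur: if $p_{1},p_{2},p_{3}$ all divided $I$, then by the approximation theorem I would choose $a,b,c\in R$ with $v_{p_{1}}(a)=e_{1}$, $v_{p_{2}}(b)=e_{2}$, $v_{p_{3}}(c)=e_{3}$, the remaining factors of $I$ loaded onto $c$, and all three units at every other relevant prime, so that $abc\in I$ while $v_{p_{3}}(ab)=0$, $v_{p_{2}}(ac)=0$, $v_{p_{1}}(bc)=0$ give $ab\notin I$, $ac\notin\sqrt I$, $bc\notin\sqrt I$, contradicting that $I$ is $2$-absorbing primary. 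Hence $I=p^{e}$ or $I=p_{1}^{e_{1}}p_{2}^{e_{2}}$, whence $N=(pM)^{e}=\mathfrak M^{e}$ or $N=(p_{1}M)^{e_{1}}(p_{2}M)^{e_{2}}=\mathfrak M_{1}^{e_{1}}\mathfrak M_{2}^{e_{2}}$ with the $\mathfrak M_{i}=p_{i}M$ maximal submodules. The implication $(2)\Rightarrow(3)$ is immediate, since a maximal submodule is prime.

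The substance is $(3)\Rightarrow(1)$. We may assume $R$ is not a field. Fix a maximal ideal $\mathfrak m$ and set $\mathfrak M=\mathfrak m M$, a prime submodule of $M$. For any nonzero submodule $Q$ with $M\text{-}\mathrm{rad}(Q)=\mathfrak M$, Theorem \ref{rad} shows $Q$ is $2$-absorbing primary, so by $(3)$ it is $P^{n}$ or $P_{1}^{n}P_{2}^{m}$ (with $P_{1}\neq P_{2}$). A direct computation gives $M\text{-}\mathrm{rad}(P^{n})=P$, while the radical ideal of $M\text{-}\mathrm{rad}(P_{1}^{n}P_{2}^{m})$ is $p_{1}\cap p_{2}$, which is not prime; since $\mathfrak M$ is prime the second shape is impossible, so $Q=\mathfrak M^{n}=\mathfrak m^{n}M$. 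Applying this to $Q=JM$ for every ideal $J$ with $\sqrt J=\mathfrak m$ (so that $M\text{-}\mathrm{rad}(Q)=\sqrt J\,M=\mathfrak M$) and cancelling $M$ shows that every $\mathfrak m$-primary ideal of $R$ is a power of $\mathfrak m$. Localizing, every $\mathfrak m R_{\mathfrak m}$-primary ideal is a power of $\mathfrak m R_{\mathfrak m}$; this forces $\mathfrak m R_{\mathfrak m}$ to be principal, for otherwise, taking minimal generators $x,y$, the ideal $(x)+\mathfrak m^{2}R_{\mathfrak m}$ would be $\mathfrak m R_{\mathfrak m}$-primary and strictly between $\mathfrak m^{2}R_{\mathfrak m}$ and $\mathfrak m R_{\mathfrak m}$, hence not a power of the maximal ideal. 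Thus each $R_{\mathfrak m}$ is a DVR, $R$ is a Dedekind domain, and by \cite{Kh} $M$ is a Dedekind module.

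The main obstacle is precisely $(3)\Rightarrow(1)$: one must squeeze enough ideal-theoretic rigidity out of the mere factorization shape of $2$-absorbing primary submodules to recognize the local rings as DVRs. The two crucial maneuvers are using Theorem \ref{rad} to see that \emph{every} submodule with $M$-radical $\mathfrak M$ is $2$-absorbing primary (so hypothesis $(3)$ applies to all $\mathfrak M$-primary submodules), and observing that the prime-radical condition annihilates the two-prime pattern, leaving only powers of $\mathfrak M$; cancellation then descends this to ideals and the standard DVR criterion closes the argument. In the forward direction the only delicate point is the valuation/approximation computation bounding the number of prime divisors of a $2$-absorbing primary ideal.
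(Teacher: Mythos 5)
Your proposal is correct in outline and in nearly all details, but it takes a genuinely different route from the paper at both substantive points. For $(1)\Rightarrow(2)$ the paper, after reducing via \cite{Kh} and Theorem \ref{m-rad} exactly as you do, simply cites the known classification of $2$-absorbing primary ideals of a Dedekind domain (\cite[Theorem 2.11]{B}); you instead reprove that classification by an approximation-theorem computation showing that a $2$-absorbing primary ideal cannot be divisible by three distinct maximal ideals (your choice of $a,b,c$ with $abc\in I$, $ab\notin I$, $ac\notin\sqrt{I}$, $bc\notin\sqrt{I}$ is a correct use of the definition), which makes this direction self-contained. For $(3)\Rightarrow(1)$ the divergence is larger: the paper takes an ideal $I$ with $\mathfrak{m}^{2}\subset I\subset\mathfrak{m}$, notes $M$-$\mathrm{rad}(IM)=\mathfrak{m}M$ so $IM$ is $2$-absorbing primary by Theorem \ref{rad}, uses cancellation to get $I=\mathfrak{p}^{n}$ or $\mathfrak{p}_{1}^{n}\mathfrak{p}_{2}^{m}$, derives a contradiction, and then concludes that $R$ is Dedekind by quoting Gilmer's criterion (\cite[Theorem 39.2, p. 470]{G}) that a Noetherian domain with no ideals properly between $\mathfrak{m}^{2}$ and $\mathfrak{m}$ is Dedekind. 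You instead show that every $\mathfrak{m}$-primary ideal is a power of $\mathfrak{m}$, localize, and run the Nakayama argument to show $\mathfrak{m}R_{\mathfrak{m}}$ is principal, so each $R_{\mathfrak{m}}$ is a DVR; in effect you prove the relevant Dedekind criterion from scratch. This costs more writing but removes the dependence on \cite{G}; the shared key maneuver with the paper is using Theorem \ref{rad} to see that submodules with prime $M$-radical are automatically $2$-absorbing primary, so that hypothesis $(3)$ bites.

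Two small repairs are needed. First, you assert without justification that $M$ is finitely generated; this does not follow from ``multiplication'' alone, and you need it for Theorem \ref{m-rad}, for cancellation, and for \cite{Kh}. The paper supplies it via the standard fact that a multiplication module over a Noetherian ring is Noetherian --- add that line. Second, in $(3)\Rightarrow(1)$ your stated reason for excluding the shape $Q=P_{1}^{n}P_{2}^{m}$, namely that ``$p_{1}\cap p_{2}$ is not prime,'' is false when the two distinct primes are comparable: if $p_{1}\subsetneq p_{2}$ then $p_{1}\cap p_{2}=p_{1}$ is prime. The correct one-line fix: cancellation turns $M$-$\mathrm{rad}(P_{1}^{n}P_{2}^{m})=\mathfrak{m}M$ into $p_{1}\cap p_{2}=\mathfrak{m}$, and since $\mathfrak{m}\subseteq p_{1}$ and $\mathfrak{m}\subseteq p_{2}$ with $\mathfrak{m}$ maximal and both $p_{i}$ proper, this forces $p_{1}=p_{2}=\mathfrak{m}$, contradicting distinctness (and if $P_{1}=P_{2}$ you are in the first shape anyway). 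With these two insertions your argument is complete and correct.
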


\begin{proof}
By the fact that every multiplication module over a Noetherian ring is a
Noetherian module, $M$ is Noetherian and so finitely generated.\newline
$(1)\Rightarrow(2)$ Let $N$ be a 2-absorbing primary submodule of $M$. There
exists a proper ideal $I$ of $R$ such that $N=IM$. So $(N:_{R}M)=I$ is a $2$%
-absorbing primary ideal of $R$, by Theorem \ref{m-rad}. Since $R$ is a
Dedekind domain, then we have either $I=\mathfrak{m}^{n}$ for some maximal
ideal $\mathfrak{m}$ of $R$ and some positive integer $n\geq1$ or $I=%
\mathfrak{m}_{1}^{n}\mathfrak{m}_{2}^{m}$ for some maximal ideals $\mathfrak{%
m}_{1}$ and $\mathfrak{m}_{2}$ of $R$ and some positive integers $n,m\geq1$,
by \textrm{\cite[Theorem 2.11]{B}}. Thus, either $N=\mathfrak{m}^{n}M=(%
\mathfrak{m}M)^{n}$ or $N=(\mathfrak{m}_{1}M)^{n}(\mathfrak{m}_{2}M)^{m}$ as
desired. \newline
$(2)\Rightarrow(3)$ is clear.\newline
$(3)\Rightarrow(1)$ It is sufficient to show that $R$ is a Dedekind domain,
for this let $\mathfrak{m}$ be a maximal ideal of $R$. Let $I$ be an ideal
of $R$ such that $\mathfrak{m}^{2}\subset I\subset\mathfrak{m}$. So $\sqrt{I}%
=\mathfrak{m}$ and then $M$-$\mathrm{rad}(IM)=\mathfrak{m}M$, since $M$ is a
faithful multiplication $R$-module. Then $IM$ is a $2$-absorbing primary
submodule of $M$, Theorem \ref{rad}. By assumption, either $IM=P^{n}$ for
some prime submodule $P$ of $M$ and some positive integer $n\geq1$ or $%
IM=P_{1}^{n}P_{2}^{m}$ for some prime submodules $P_{1}$ and $P_{2}$ of $M$
and some positive integers $n,m\geq1$. Now, since $M$ is cancellation,
either $I=\mathfrak{p}^{n}$ for some prime ideal $\mathfrak{p}$ of $R$ or $I=%
\mathfrak{p}_{1}^{n}\mathfrak{p}_{2}^{m}$ for some prime ideals $\mathfrak{p}%
_{1}$ and $\mathfrak{p}_{2}$ of $R$, which any two cases have a
contradiction. Hence there are no ideals properly between $\mathfrak{m}^{2}$
and $\mathfrak{m}$. Consequently $R$ is a Dedekind domain by \textrm{\cite[%
Theorem 39.2, p. 470]{G}}.
\end{proof}

\begin{proposition}
\label{sqrt} Let $M$ be a multiplication $R$-module and $K,~N$ be submodules
of $M$. Then

\begin{enumerate}
\item $\sqrt{(KN:_{R}M)}=\sqrt{(K:_{R}M)}\cap\sqrt{(N:_{R}M)}$.

\item $M$-${rad}(KN)=M$-${rad}(K)\cap M$-${rad}(N)$.

\item $M$-${rad}(K\cap N)=M$-${rad}(K)\cap M$-${rad}(N)$.
\end{enumerate}
\end{proposition}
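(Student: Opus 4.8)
The plan is to obtain all three identities from the single description of the module product $KN=(K:_{R}M)(N:_{R}M)M$, which is available because $M$ is a multiplication module: there $K=(K:_{R}M)M$ and $N=(N:_{R}M)M$, and the product is independent of the chosen presentation ideals. For (1), first note that $KN\subseteq K$ and $KN\subseteq N$, so $(KN:_{R}M)\subseteq(K:_{R}M)\cap(N:_{R}M)$ and hence $\sqrt{(KN:_{R}M)}\subseteq\sqrt{(K:_{R}M)}\cap\sqrt{(N:_{R}M)}$. For the reverse inclusion I will use $KN=(K:_{R}M)(N:_{R}M)M$, which yields $(K:_{R}M)(N:_{R}M)\subseteq(KN:_{R}M)$; taking radicals and invoking the standard identity $\sqrt{I}\cap\sqrt{J}=\sqrt{IJ}$ gives $\sqrt{(K:_{R}M)}\cap\sqrt{(N:_{R}M)}=\sqrt{(K:_{R}M)(N:_{R}M)}\subseteq\sqrt{(KN:_{R}M)}$. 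The two inclusions prove (1).

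For (2) I will argue at the level of prime submodules. The key claim is that for every prime submodule $P$ of $M$ one has $KN\subseteq P$ if and only if $K\subseteq P$ or $N\subseteq P$. The implication $\Leftarrow$ is immediate from $KN\subseteq K\cap N$. For $\Rightarrow$, write $\mathfrak{p}=(P:_{R}M)$, a prime ideal, and recall $P=\mathfrak{p}M$; then $KN=(K:_{R}M)(N:_{R}M)M\subseteq P$ forces $(K:_{R}M)(N:_{R}M)\subseteq\mathfrak{p}$, and primeness of $\mathfrak{p}$ gives $(K:_{R}M)\subseteq\mathfrak{p}$ or $(N:_{R}M)\subseteq\mathfrak{p}$, that is $K\subseteq P$ or $N\subseteq P$. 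Consequently the prime submodules containing $KN$ are exactly those containing $K$ together with those containing $N$, and intersecting gives $M$-$\mathrm{rad}(KN)=M$-$\mathrm{rad}(K)\cap M$-$\mathrm{rad}(N)$; the degenerate case in which some module lies in no prime submodule is absorbed by the convention $M$-$\mathrm{rad}(\,\cdot\,)=M$ and is consistent with the same equivalence.

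For (3) I will use only that $M$-$\mathrm{rad}$ is inclusion preserving. From $KN\subseteq K\cap N\subseteq K$ and $K\cap N\subseteq N$ I get $M$-$\mathrm{rad}(KN)\subseteq M$-$\mathrm{rad}(K\cap N)\subseteq M$-$\mathrm{rad}(K)\cap M$-$\mathrm{rad}(N)$, and since the two ends coincide by (2), equality propagates to the middle term, which is exactly (3).

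The step I expect to require the most care is the $\Rightarrow$ direction in (2). It is tempting to instead deduce (2) directly from (1) by applying $\sqrt{(\,\cdot\,)}M$ through the formula $M$-$\mathrm{rad}(L)=\sqrt{(L:_{R}M)}M$, but this would require $(\sqrt{(K:_{R}M)}\cap\sqrt{(N:_{R}M)})M=\sqrt{(K:_{R}M)}M\cap\sqrt{(N:_{R}M)}M$, and the identity $(A\cap B)M=AM\cap BM$ can fail for a general, non--finitely generated multiplication module. Routing (2) through prime submodules, where the product description $KN=(K:_{R}M)(N:_{R}M)M$ together with $P=(P:_{R}M)M$ and the primeness of $(P:_{R}M)$ does the work directly, is what lets me sidestep this pitfall.
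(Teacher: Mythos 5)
Your proof is correct, and for parts (2) and (3) it takes a genuinely different route from the paper's. Part (1) is essentially the paper's argument: the paper computes elementwise with presentation ideals $K=IM$, $N=JM$ (showing $r^{m+n}M\subseteq IJM=KN$), while you package the same content via $KN=(K:_{R}M)(N:_{R}M)M$ and the identity $\sqrt{IJ}=\sqrt{I}\cap\sqrt{J}$. For (2), however, the paper does exactly what you flagged as tempting: it writes $M$-$\mathrm{rad}(L)=\sqrt{(L:_{R}M)}M$ and distributes $M$ over the intersection of radicals --- but it justifies this by citing \cite[Corollary 1.7]{ES}, which gives $AM\cap BM=([A+\mathrm{ann}M]\cap[B+\mathrm{ann}M])M$; since $\sqrt{(K:_{R}M)}$ and $\sqrt{(N:_{R}M)}$ both contain $\mathrm{ann}M$, the correction terms are absorbed and the distribution is legitimate in this instance, so your caveat, while right for general ideals, does not invalidate the paper's route here. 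Your alternative --- characterizing the prime submodules containing $KN$ as exactly those containing $K$ together with those containing $N$, via $P=(P:_{R}M)M$ and primeness of $(P:_{R}M)$ --- works straight from the definition of $M$-$\mathrm{rad}$ as an intersection of primes and handles the empty-family convention cleanly. For (3) the paper merely cites \cite[Theorem 15(3)]{A}, whereas your squeeze $M$-$\mathrm{rad}(KN)\subseteq M$-$\mathrm{rad}(K\cap N)\subseteq M$-$\mathrm{rad}(K)\cap M$-$\mathrm{rad}(N)$, with the two ends equal by (2), is a short self-contained proof. Net effect: the paper leans on two external results from \cite{ES} and \cite{A}; you trade those citations for a direct prime-submodule argument, at the modest cost of invoking the standard multiplication-module facts $K=(K:_{R}M)M$ and the independence of the product $KN$ from the chosen presentations, both of which the paper itself records in the introduction.
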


\begin{proof}
(1) By hypothesis there exist ideals $I,~J$ of $R$ such that $K=IM$ and $N=JM
$. Now assume $r\in\sqrt{(K:_{R}M)}\cap\sqrt{(N:_{R}M)}$. Therefore there
exist positive integers $m,~n$ such that $r^{m}M\subseteq IM$ and $%
r^{n}M\subseteq JM$. Hence $r^{m+n}M\subseteq r^{m}JM\subseteq IJM=KN$. So $%
r\in \sqrt{(KN:_{R}M)}$. Consequently $\sqrt{(K:_{R}M)}\cap\sqrt{(N:_{R}M)}%
\subseteq\sqrt{(KN:_{R}M)}$. The other inclusion trivially holds. \newline
$(2)$ By part (1) and \textrm{\cite[Corollary 1.7]{ES}}, 
\begin{eqnarray*}
M-\mathrm{rad}(KN) &=&\sqrt{(KN:_{R}M)}M=(\sqrt{(K:_{R}M)}\cap \sqrt{%
(N:_{R}M)})M \\
&=&([\sqrt{(K:_{R}M)}+\mathrm{ann}M]\cap \lbrack \sqrt{(N:_{R}M)}+\mathrm{ann%
}M])M \\
&=&\sqrt{(K:_{R}M)}M\cap \sqrt{(N:_{R}M)}M \\
&=&M-\mathrm{rad}(K)\cap M-\mathrm{rad}(N).
\end{eqnarray*}
$(3)$ See \textrm{\cite[Theorem 15(3)]{A}}.
\end{proof}

\begin{theorem}
Let $M$ be a multiplication $R$-module and $N_{1}, N_{2},...,N_{n}$ be
2-absorbing primary submodules of $M$ with the same $M$-radical. Then $%
N=\cap_{i=1}^{n}N_{i}$ is a 2-absorbing primary submodule of $M$.
\end{theorem}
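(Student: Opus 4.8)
The plan is to reduce everything to the common $M$-radical. Write $P$ for the shared value $M$-$\mathrm{rad}(N_{1})=\cdots=M$-$\mathrm{rad}(N_{n})$. The first step is to identify $M$-$\mathrm{rad}(N)$: applying Proposition \ref{sqrt}(3) and inducting on $n$ (writing $\bigcap_{i=1}^{n}N_{i}=N_{1}\cap\bigcap_{i=2}^{n}N_{i}$ and iterating), I would obtain
\[
M-\mathrm{rad}(N)=M-\mathrm{rad}\Big(\bigcap_{i=1}^{n}N_{i}\Big)=\bigcap_{i=1}^{n}M-\mathrm{rad}(N_{i})=P,
\]
so the intersection has the same $M$-radical $P$ as every $N_{i}$.

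The second ingredient I would record is that the residual distributes over intersections, namely $(N:_{R}M)=\bigcap_{i=1}^{n}(N_{i}:_{R}M)$. This is immediate, since $rM\subseteq\bigcap_{i}N_{i}$ holds exactly when $rM\subseteq N_{i}$ for every $i$.

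With these two facts in hand the verification is direct. Given $a,b\in R$ and $m\in M$ with $abm\in N$, I would note that the two conditions $am\in P$ and $bm\in P$ are independent of the index $i$. If either of them holds, then since $P=M$-$\mathrm{rad}(N)$ we immediately get $am\in M$-$\mathrm{rad}(N)$ or $bm\in M$-$\mathrm{rad}(N)$, and we are finished. Otherwise, for each $i$ I use that $abm\in N_{i}$ together with $N_{i}$ being $2$-absorbing primary with $M$-$\mathrm{rad}(N_{i})=P$: one of $am\in P$, $bm\in P$, $ab\in(N_{i}:_{R}M)$ must hold, and as the first two have been excluded I conclude $ab\in(N_{i}:_{R}M)$. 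Since this holds for every $i$, the distributivity of the residual yields $ab\in\bigcap_{i=1}^{n}(N_{i}:_{R}M)=(N:_{R}M)$, which completes the check that $N$ is a $2$-absorbing primary submodule of $M$.

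There is essentially no hard step here; the only point requiring attention is the bookkeeping of the case analysis. What makes it go through cleanly is that the hypothesis forces all the $N_{i}$ to share the single $M$-radical $P$, so the two ``radical'' outcomes of the $2$-absorbing primary condition are interchangeable across indices, while the simultaneous failure of both radical outcomes is precisely what drives $ab$ into each residual $(N_{i}:_{R}M)$ and hence into their intersection.
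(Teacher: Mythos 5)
Your proof is correct and takes essentially the same approach as the paper's: both identify $M$-$\mathrm{rad}(N)=\bigcap_{i=1}^{n}M$-$\mathrm{rad}(N_{i})=P$ via Proposition \ref{sqrt} and use $(N:_{R}M)=\bigcap_{i=1}^{n}(N_{i}:_{R}M)$. The only difference is direction: the paper assumes $ab\notin(N:_{R}M)$, picks one index $i$ with $ab\notin(N_{i}:_{R}M)$, and extracts $am\in P$ or $bm\in P$, whereas you exclude the radical outcomes and force $ab$ into every residual --- the same argument read contrapositively.
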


\begin{proof}
Notice that $M$-$\mathrm{rad}(N)=\cap_{i=1}^{n}M$-$\mathrm{rad}(N_{i})$, by
Proposition \ref{sqrt}. Suppose that $abm\in N$ for some $a,b\in R$ and $%
m\in M$ and $ab\not\in(N:_{R}M)$. Then $ab\not\in(N_{i}:_{R}M)$ for some $%
1\leq i\leq n$. Hence $am\in M$-$\mathrm{rad}(N_{i})$ or $bm\in M$-$\mathrm{%
rad}(N_{i})$.
\end{proof}

\begin{lemma}
\label{element} Let $M$ be an $R$-module and $N$ a 2-absorbing primary
submodule of $M$. Suppose that $abK\subseteq N$ for some elements $a,b\in R$
and some submodule $K$ of $M$. If $ab\not\in(N:_{R}M)$, then $aK\subseteq M$-%
${rad}(N)$ or $bK\subseteq M$-${rad}(N)$.
\end{lemma}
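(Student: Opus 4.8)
The plan is to argue by contradiction, using the standard trick of combining two separate witness elements into a single sum to which the definition can be applied. Suppose, toward a contradiction, that $ab\notin(N:_{R}M)$ but neither $aK\subseteq M\text{-}\mathrm{rad}(N)$ nor $bK\subseteq M\text{-}\mathrm{rad}(N)$. Then I can choose witnesses $k_{1},k_{2}\in K$ with $ak_{1}\notin M\text{-}\mathrm{rad}(N)$ and $bk_{2}\notin M\text{-}\mathrm{rad}(N)$.

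Next I would apply the defining property of a $2$-absorbing primary submodule to each of the products $abk_{1}$, $abk_{2}$, and $ab(k_{1}+k_{2})$, all of which lie in $N$ because $abK\subseteq N$. From $abk_{1}\in N$, since $ab\notin(N:_{R}M)$ and $ak_{1}\notin M\text{-}\mathrm{rad}(N)$, the definition forces $bk_{1}\in M\text{-}\mathrm{rad}(N)$. Symmetrically, from $abk_{2}\in N$ together with $ab\notin(N:_{R}M)$ and $bk_{2}\notin M\text{-}\mathrm{rad}(N)$, I obtain $ak_{2}\in M\text{-}\mathrm{rad}(N)$.

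Finally I would feed the sum $k_{1}+k_{2}$ into the definition. Since $ab(k_{1}+k_{2})\in N$ and $ab\notin(N:_{R}M)$, one of $a(k_{1}+k_{2})\in M\text{-}\mathrm{rad}(N)$ or $b(k_{1}+k_{2})\in M\text{-}\mathrm{rad}(N)$ must hold. In the first case, using $ak_{2}\in M\text{-}\mathrm{rad}(N)$ and the fact that $M\text{-}\mathrm{rad}(N)$ is a submodule (closed under subtraction), I get $ak_{1}\in M\text{-}\mathrm{rad}(N)$, contradicting the choice of $k_{1}$. In the second case, using $bk_{1}\in M\text{-}\mathrm{rad}(N)$ gives $bk_{2}\in M\text{-}\mathrm{rad}(N)$, contradicting the choice of $k_{2}$. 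Either way we reach a contradiction, so the assumption fails and the conclusion follows.

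The only real subtlety—and the step I expect to be the crux—is the introduction of the cross term $k_{1}+k_{2}$: the two witnesses $k_{1}$ and $k_{2}$ individually only tell us about $bk_{1}$ and $ak_{2}$, and it is precisely the application of the definition to their sum, combined with the additive closure of $M\text{-}\mathrm{rad}(N)$, that forces the contradiction. Everything else is a routine invocation of the definition and of the submodule structure of the radical.
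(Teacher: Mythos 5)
Your proof is correct and is essentially identical to the paper's own argument: the same choice of witnesses $k_{1},k_{2}$, the same two preliminary deductions that $bk_{1},ak_{2}\in M$-$\mathrm{rad}(N)$, and the same application of the definition to the sum $k_{1}+k_{2}$, using that $M$-$\mathrm{rad}(N)$ is a submodule to extract the contradiction. Nothing to fix.
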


\begin{proof}
Suppose that $aK\nsubseteq M$-$\mathrm{rad}(N)$ and $bK\nsubseteq M$-$%
\mathrm{rad}(N)$. Then $ak_{1}\not\in M$-$\mathrm{rad}(N)$ and $%
bk_{2}\not\in M$-$\mathrm{rad}(N)$ for some $k_{1},k_{2}\in K$. Since $%
abk_{1}\in N$ and $ab\not\in(N:_{R}M)$ and $ak_{1}\not\in M$-$\mathrm{rad}%
(N) $, we have $bk_{1}\in M$-$\mathrm{rad}(N)$. Since $abk_{2}\in N$ and $%
ab\not\in(N:_{R}M)$ and $bk_{2}\not\in M$-$\mathrm{rad}(N)$, we have $%
ak_{2}\in M$-$\mathrm{rad}(N)$. Now, since $ab(k_{1}+k_{2})\in N$ and $%
ab\not\in(N:_{R}M)$, we have $a(k_{1}+k_{2})\in M$-$\mathrm{rad}(N)$ or $%
b(k_{1}+k_{2})\in M$-$\mathrm{rad}(N)$. Suppose that $%
a(k_{1}+k_{2})=ak_{1}+ak_{2}\in M$-$\mathrm{rad}(N)$. Since $ak_{2}\in M$-$%
\mathrm{rad}(N)$, we have $ak_{1}\in M$-$\mathrm{rad}(N)$, a contradiction.
Suppose that $b(k_{1}+k_{2})=bk_{1}+bk_{2}\in M$-$\mathrm{rad}(N)$. Since $%
bk_{1}\in M$-$\mathrm{rad}(N)$, we have $bk_{2}\in M$-$\mathrm{rad}(N)$, a
contradiction again. Thus $aK\subseteq M$-$\mathrm{rad}(N)$ or $bK\subseteq
M $-$\mathrm{rad}(N)$.
\end{proof}

The following theorem offers a characterization of 2-absorbing primary
submodules.

\begin{theorem}
\label{main} Let $M$ be an $R$-module and $N$ be a proper submodule of $M$.
The following conditions are equivalent:

\begin{enumerate}
\item $N$ is a 2-absorbing primary submodule of $M$;

\item If $I_{1}I_{2}K\subseteq N$ for some ideals $I_{1},~I_{2}$ of $R$ and
some submodule $K$ of $M$, then either $I_{1}I_{2}\subseteq(N:_{R}M)$ or $%
I_{1}K\subseteq M$-${rad}(N)$ or $I_{2}K\subseteq M$-${rad}(N)$;

\item If $N_1N_2N_3\subseteq N$ for some submodules $N_1,~N_2$ and $N_3$ of $%
M$, then either $N_1N_2\subseteq N$ or $N_1N_3\subseteq M$-${rad}(N)$ or $%
N_2N_3\subseteq M$-${rad}(N)$.
\end{enumerate}
\end{theorem}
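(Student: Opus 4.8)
The plan is to prove $(2)\Rightarrow(1)$ and the two implications connecting $(2)$ with $(3)$ by routine specialization, and to concentrate the real work on $(1)\Rightarrow(2)$. For $(2)\Rightarrow(1)$, given $a,b\in R$ and $m\in M$ with $abm\in N$ and $ab\notin(N:_{R}M)$, I would take $I_{1}=Ra$, $I_{2}=Rb$, $K=Rm$, so that $I_{1}I_{2}K=Rabm\subseteq N$ while $I_{1}I_{2}=Rab\nsubseteq(N:_{R}M)$; then $(2)$ forces $am\in M$-$\mathrm{rad}(N)$ or $bm\in M$-$\mathrm{rad}(N)$. For the equivalence of $(2)$ and $(3)$ (here the products of submodules require $M$ to be a multiplication module), I would write $N_{i}=I_{i}M$ and use $N_{1}N_{2}=I_{1}I_{2}M$ together with $N_{i}N_{3}=I_{i}K$ where $K=N_{3}$, and the identity $N_{1}N_{2}\subseteq N\iff I_{1}I_{2}\subseteq(N:_{R}M)$; with these dictionaries $(2)$ and $(3)$ translate into one another directly.

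The substance is $(1)\Rightarrow(2)$, which I would reduce to the principal case already settled in Lemma \ref{element}. The first observation is that $I_{1}K\subseteq M$-$\mathrm{rad}(N)$ holds if and only if $aK\subseteq M$-$\mathrm{rad}(N)$ for every $a\in I_{1}$, since $I_{1}K$ is generated by the products $ak$ and $M$-$\mathrm{rad}(N)$ is a submodule. Assuming $I_{1}I_{2}K\subseteq N$ with $I_{1}K\nsubseteq M$-$\mathrm{rad}(N)$ and $I_{2}K\nsubseteq M$-$\mathrm{rad}(N)$, I can therefore fix witnesses $a_{1}\in I_{1}$ and $b_{1}\in I_{2}$ with $a_{1}K\nsubseteq M$-$\mathrm{rad}(N)$ and $b_{1}K\nsubseteq M$-$\mathrm{rad}(N)$; Lemma \ref{element} applied to $a_{1}b_{1}K\subseteq N$ then gives $a_{1}b_{1}\in(N:_{R}M)$. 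It remains to prove $ab\in(N:_{R}M)$ for every $a\in I_{1}$ and $b\in I_{2}$, since $(N:_{R}M)$ is an ideal and these products generate $I_{1}I_{2}$.

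I would carry out this last step by a case analysis on whether $aK$ and $bK$ lie in $M$-$\mathrm{rad}(N)$, applying Lemma \ref{element} in its contrapositive form: if $aK\nsubseteq M$-$\mathrm{rad}(N)$, $bK\nsubseteq M$-$\mathrm{rad}(N)$ and $abK\subseteq N$, then $ab\in(N:_{R}M)$. When both $aK,bK\nsubseteq M$-$\mathrm{rad}(N)$ this is immediate. When exactly one fails, say $aK\subseteq M$-$\mathrm{rad}(N)$ but $bK\nsubseteq M$-$\mathrm{rad}(N)$, the key remark is that $(a_{1}+a)K\nsubseteq M$-$\mathrm{rad}(N)$, for otherwise $a_{1}K\subseteq(a_{1}+a)K+aK\subseteq M$-$\mathrm{rad}(N)$, a contradiction; applying the Lemma to $(a_{1}+a)bK\subseteq N$ and to $a_{1}bK\subseteq N$ yields $(a_{1}+a)b,\,a_{1}b\in(N:_{R}M)$, whence $ab\in(N:_{R}M)$ by subtraction. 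The genuinely delicate case, and the main obstacle, is when both $aK\subseteq M$-$\mathrm{rad}(N)$ and $bK\subseteq M$-$\mathrm{rad}(N)$: here I would pass to the perturbed elements $a_{1}+a$ and $b_{1}+b$, which both survive outside $M$-$\mathrm{rad}(N)$ by the same remark, and apply Lemma \ref{element} to the four products $a_{1}b_{1}K$, $a_{1}(b_{1}+b)K$, $(a_{1}+a)b_{1}K$ and $(a_{1}+a)(b_{1}+b)K$, all contained in $N$. This gives $a_{1}b_{1},\,a_{1}b,\,ab_{1},\,(a_{1}+a)(b_{1}+b)\in(N:_{R}M)$, and expanding $(a_{1}+a)(b_{1}+b)=a_{1}b_{1}+a_{1}b+ab_{1}+ab$ isolates $ab\in(N:_{R}M)$. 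Ranging over all $a\in I_{1}$ and $b\in I_{2}$ then gives $I_{1}I_{2}\subseteq(N:_{R}M)$, completing $(1)\Rightarrow(2)$.
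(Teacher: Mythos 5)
Your proposal is correct and takes essentially the same route as the paper: the heart of $(1)\Rightarrow(2)$ in both is Lemma \ref{element} combined with the sum-perturbation trick (passing to $a_{1}+a$ and $b_{1}+b$, which stay outside $M$-$\mathrm{rad}(N)$ because the radical is a submodule) and the same three-way case analysis on which of the two products lands in $M$-$\mathrm{rad}(N)$, while $(2)\Rightarrow(1)$ and $(2)\Leftrightarrow(3)$ are handled by the same routine specializations, including the (correct) observation that part $(3)$ tacitly requires $M$ to be a multiplication module. The only difference is organizational: you argue directly that every generator product $ab$ with $a\in I_{1}$, $b\in I_{2}$ lies in $(N:_{R}M)$, whereas the paper fixes a witness pair with $b_{1}b_{2}\notin(N:_{R}M)$ and derives a contradiction; under the relabeling of the paper's $(a_{1},a_{2},b_{1},b_{2})$ as your $(a_{1},b_{1},a,b)$ the computations coincide case by case.
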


\begin{proof}
(1)$\Rightarrow$(2) Suppose that $N$ is a $2$-absorbing primary submodule of 
$M$ and $I_{1}I_{2}K\subseteq N$ for some ideals $I_{1},I_{2}$ of $R$ and
some submodule $K$ of $M$ and $I_{1}I_{2}\nsubseteq(N:_{R}M)$. We show that $%
I_{1}K\subseteq M$-$\mathrm{rad}(N)$ or $I_{2}K\subseteq M$-$\mathrm{rad}(N)$%
. Suppose that $I_{1}K\nsubseteq M$-$\mathrm{rad}(N)$ and $I_{2}K\nsubseteq
M $-$\mathrm{rad}(N)$. Then there are $a_{1}\in I_{1}$ and $a_{2}\in I_{2}$
such that $a_{1}K\nsubseteq M$-$\mathrm{rad}(N)$ and $a_{2}K\nsubseteq M$-$%
\mathrm{rad}(N)$. Since $a_{1}a_{2}K\subseteq N$ and neither $%
a_{1}K\subseteq M$-$\mathrm{rad}(N)$ nor $a_{2}K\subseteq M$-$\mathrm{rad}%
(N) $, we have $a_{1}a_{2}\in(N:_{R}M)$ by Lemma \ref{element}.\newline
Since $I_{1}I_{2}\nsubseteq(N:_{R}M)$, we have $b_{1}b_{2}\not\in(N:_{R}M)$
for some $b_{1}\in I_{1}$ and $b_{2}\in I_{2}$. Since $b_{1}b_{2}K\subseteq
N $ and $b_{1}b_{2}\not\in(N:_{R}M)$, we have $b_{1}K\subseteq M$-$\mathrm{%
rad}(N)$ or $b_{2}K\subseteq M$-$\mathrm{rad}(N)$ by Lemma \ref{element}. We
consider three cases.

\textbf{Case 1}. Suppose that $b_{1}K\subseteq M$-$\mathrm{rad}(N)$ but $%
b_{2}K\nsubseteq M$-$\mathrm{rad}(N)$. Since $a_{1}b_{2}K\subseteq N$ and
neither $b_{2}K\subseteq M$-$\mathrm{rad}(N)$ nor $a_{1}K\subseteq M$-$%
\mathrm{rad}(N)$, we conclude that $a_{1}b_{2}\in(N:_{R}M)$ by Lemma \ref%
{element}. Since $b_{1}K\subseteq M$-$\mathrm{rad}(N)$ but $a_{1}K\nsubseteq
M$-$\mathrm{rad}(N)$, we conclude that $(a_{1}+b_{1})K\nsubseteq M$-$\mathrm{%
rad}(N)$. Since $(a_{1}+b_{1})b_{2}K\subseteq N$ and neither $%
b_{2}K\subseteq M$-$\mathrm{rad}(N)$ nor $(a_{1}+b_{1})K\subseteq M$-$%
\mathrm{rad}(N)$, we conclude that $(a_{1}+b_{1})b_{2}\in (N:_{R}M)$ by
Lemma \ref{element}. Since $(a_{1}+b_{1})b_{2}=a_{1}b_{2}+b_{1}b_{2}\in
(N:_{R}M)$ and $a_{1}b_{2}\in (N:_{R}M)$, we conclude that $%
b_{1}b_{2}\in(N:_{R}M)$, a contradiction.

\textbf{Case 2}. Suppose that $b_{2}K\subseteq M$-$\mathrm{rad}(N)$ but $%
b_{1}K\nsubseteq M$-$\mathrm{rad}(N)$. Similar to the previous case we reach
to a contradiction.

\textbf{Case 3}. Suppose that $b_{1}K\subseteq M$-$\mathrm{rad}(N)$ and $%
b_{2}K\subseteq M$-$\mathrm{rad}(N)$. Since $b_{2}K\subseteq M$-$\mathrm{rad}%
(N)$ and $a_{2}K\nsubseteq M$-$\mathrm{rad}(N)$, we conclude that $%
(a_{2}+b_{2})K\nsubseteq M$-$\mathrm{rad}(N)$. Since $a_{1}(a_{2}+b_{2})K%
\subseteq N$ and neither $a_{1}K\subseteq M$-$\mathrm{rad}(N)$ nor $%
(a_{2}+b_{2})K\subseteq M$-$\mathrm{rad}(N)$, we conclude that $%
a_{1}(a_{2}+b_{2})=a_{1}a_{2}+a_{1}b_{2}\in(N:_{R}M)$ by Lemma \ref{element}%
. Since $a_{1}a_{2}\in(N:_{R}M)$ and $a_{1}a_{2}+a_{1}b_{2}\in(N:_{R}M)$, we
conclude that $a_{1}b_{2}\in(N:_{R}M)$. Since $b_{1}K\subseteq M$-$\mathrm{%
rad}(N)$ and $a_{1}K\nsubseteq M$-$\mathrm{rad}(N)$, we conclude that $%
(a_{1}+b_{1})K\nsubseteq M$-$\mathrm{rad}(N)$. Since $(a_{1}+b_{1})a_{2}K%
\subseteq N$ and neither $a_{2}K\subseteq M$-$\mathrm{rad}(N)$ nor $%
(a_{1}+b_{1})K\subseteq M$-$\mathrm{rad}(N)$, we conclude that $%
(a_{1}+b_{1})a_{2}=a_{1}a_{2}+b_{1}a_{2}\in(N:_{R}M)$ by Lemma \ref{element}%
. Since $a_{1}a_{2}\in(N:_{R}M)$ and $a_{1}a_{2}+b_{1}a_{2}\in(N:_{R}M)$, we
conclude that $b_{1}a_{2}\in(N:_{R}M)$. Now, since $%
(a_{1}+b_{1})(a_{2}+b_{2})K\subseteq N$ and neither $(a_{1}+b_{1})K\subseteq
M$-$\mathrm{rad}(N)$ nor $(a_{2}+b_{2})K\subseteq M$-$\mathrm{rad}(N)$, we
conclude that $%
(a_{1}+b_{1})(a_{2}+b_{2})=a_{1}a_{2}+a_{1}b_{2}+b_{1}a_{2}+b_{1}b_{2}%
\in(N:_{R}M)$ by Lemma \ref{element}. Since $%
a_{1}a_{2},a_{1}b_{2},b_{1}a_{2}\in(N:_{R}M)$, we have $b_{1}b_{2}%
\in(N:_{R}M)$, a contradiction. Consequently $I_{1}K\subseteq M$-$\mathrm{rad%
}(N)$ or $I_{2}K\subseteq M$-$\mathrm{rad}(N)$.\newline
(2)$\Rightarrow$(1) is trivial.\newline
(2)$\Rightarrow$(3) Let $N_1N_2N_3\subseteq N$ for some submodules $N_1,~N_2$
and $N_3$ of $M$ such that $N_1N_2\nsubseteq N$. Since $M$ is
multiplication, there are ideals $I_1,~I_2$ of $R$ such that $N_1=I_1M$, $%
N_2=I_2M$. Clearly $I_1I_2N_3\subseteq N$ and $I_1I_2\nsubseteq(N:_RM)$.
Therefore $I_1N_3\subseteq M$-$\mathrm{rad}(N)$ or $I_2N_3\subseteq M$-$%
\mathrm{rad}(N)$, which implies that $N_1N_3\subseteq M$-$\mathrm{rad}(N)$
or $N_2N_3\subseteq M$-$\mathrm{rad}(N)$.\newline
(3)$\Rightarrow$(2) Suppose that $I_{1}I_{2}K\subseteq N$ for some ideals $%
I_{1},~I_{2}$ of $R$ and some submodule $K$ of $M$. It is sufficient to set $%
N_1:=I_1M$, $N_2:=I_2M$ and $N_3=K$ in part (3).
\end{proof}

\begin{theorem}
\label{conve} Let $M$ be a multiplication $R$-module and $N$ a submodule of $%
M$. If $(N:_{R}M)$ is a 2-absorbing primary ideal of $R$, then $N$ is a
2-absorbing primary submodule of $M$.
\end{theorem}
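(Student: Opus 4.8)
The plan is to verify the defining property of a $2$-absorbing primary submodule directly, using the multiplication hypothesis to turn element-level data into an ideal-level inclusion. So I take $a,b\in R$ and $m\in M$ with $abm\in N$, and I assume $ab\notin(N:_{R}M)$; the goal is to deduce $am\in M$-$\mathrm{rad}(N)$ or $bm\in M$-$\mathrm{rad}(N)$. Since $M$ is a multiplication module, the cyclic submodule $Rm$ has a presentation $Rm=JM$ for some ideal $J$ of $R$. For each $c\in J$ we then have $cM\subseteq JM=Rm$, and since $abm\in N$ forces $ab(Rm)\subseteq N$ (for $x=rm$ one has $ab\,x=r(abm)\in N$), it follows that $abcM\subseteq N$, i.e. $abc\in(N:_{R}M)$. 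In other words $abJ\subseteq(N:_{R}M)$.

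Next I would feed this into the hypothesis that $(N:_{R}M)$ is a $2$-absorbing primary ideal. For each $c\in J$ the relation $abc\in(N:_{R}M)$ gives $ab\in(N:_{R}M)$ or $ac\in\sqrt{(N:_{R}M)}$ or $bc\in\sqrt{(N:_{R}M)}$; the first case is excluded by assumption, so every $c\in J$ satisfies $ac\in\sqrt{(N:_{R}M)}$ or $bc\in\sqrt{(N:_{R}M)}$. Writing $P=\sqrt{(N:_{R}M)}$, this reads $J\subseteq(P:_{R}a)\cup(P:_{R}b)$. Applying the elementary fact already exploited earlier in this section, namely that an additive subgroup contained in the union of two subgroups is contained in one of them, I conclude $J\subseteq(P:_{R}a)$ or $J\subseteq(P:_{R}b)$, equivalently $aJ\subseteq\sqrt{(N:_{R}M)}$ or $bJ\subseteq\sqrt{(N:_{R}M)}$.

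Finally I would translate this back to a statement about $m$. If $aJ\subseteq\sqrt{(N:_{R}M)}$, then writing $m=\sum_i j_i m_i$ with $j_i\in J$ and $m_i\in M$, which is possible since $m\in Rm=JM$, gives $am=\sum_i(aj_i)m_i\in\sqrt{(N:_{R}M)}M$; and the latter equals $M$-$\mathrm{rad}(N)$ because $M$ is multiplication, by \cite[Theorem 2.12]{ES}. Symmetrically, $bJ\subseteq\sqrt{(N:_{R}M)}$ yields $bm\in M$-$\mathrm{rad}(N)$. In either case the required alternative holds, so $N$ is a $2$-absorbing primary submodule.

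The step I expect to carry the real weight is the reduction that keeps $a$ and $b$ as \emph{single} ring elements: by presenting only the cyclic submodule $Rm$ as $JM$ and running the $2$-absorbing primary hypothesis over the scalars $c\in J$, I can finish with the two-subgroup avoidance lemma and never need a \emph{strongly $2$-absorbing primary} statement for ideals, which is the route one would be tempted to take through the characterization in Theorem \ref{main}. The passage $abm\in N\Rightarrow abJ\subseteq(N:_{R}M)$ and the closing identification $\sqrt{(N:_{R}M)}M=M$-$\mathrm{rad}(N)$ are precisely the two places where the multiplication hypothesis is essential.
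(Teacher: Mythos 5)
Your proof is correct, but it takes a genuinely different route from the paper's. The paper argues entirely at the ideal level: given $I_{1}I_{2}K\subseteq N$, it writes $K=I_{3}M$, passes to $I_{1}I_{2}I_{3}\subseteq (N:_{R}M)$, invokes the result that $2$-absorbing primary ideals are \emph{strongly} $2$-absorbing primary (\cite[Theorem 2.19]{Bt}) to split into the three cases, multiplies the radical cases back through by $M$, and then concludes via the characterization in Theorem \ref{main}. You instead verify the element-level definition directly: you present only the cyclic submodule $Rm=JM$, extract $abJ\subseteq (N:_{R}M)$ from $abm\in N$ (via $abcM\subseteq ab(Rm)\subseteq N$ for $c\in J$), run the bare definition of a $2$-absorbing primary ideal over each scalar $c\in J$, and finish with the two-subgroup avoidance lemma (the same elementary fact the paper uses in its Theorem 2.3) to get $aJ\subseteq\sqrt{(N:_{R}M)}$ or $bJ\subseteq\sqrt{(N:_{R}M)}$, whence $am$ or $bm$ lies in $\sqrt{(N:_{R}M)}M=M$-$\mathrm{rad}(N)$ by \cite[Theorem 2.12]{ES}; every step checks out, including the tacit point that $(N:_{R}M)$ being a proper ideal forces $N$ proper, so that $M$-$\mathrm{rad}(N)$ makes sense and the identification applies. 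The trade-off: the paper's proof is a three-line reduction given the machinery it has already built (Theorem \ref{main}, itself resting on the lengthy Lemma \ref{element}) plus the imported strongly-$2$-absorbing-primary theorem, and it delivers the ideal-level conclusion in one stroke; yours is self-contained modulo \cite[Theorem 2.12]{ES}, replacing the cited theorem by the elementary avoidance argument, and establishes exactly the definitional form the statement asserts --- from which the ideal-level form follows anyway via Theorem \ref{main}.
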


\begin{proof}
Let $I_1I_2K\subseteq N$ for some ideals $I_1,~I_2$ of $R$ and some
submodule $K$ of $M$. Since $M$ is multiplication, then there is an ideal $%
I_3$ of $R$ such that $K=I_3M$. Hence $I_1I_2I_3\subseteq (N:_{R}M)$ which
implies that either $I_1I_2\subseteq (N:_{R}M)$ or $I_1I_3\subseteq \sqrt{%
(N:_{R}M)}$ or $I_2I_3\subseteq \sqrt{(N:_{R}M)}$, by \textrm{\cite[Theorem
2.19]{Bt}}. If $I_1I_2\subseteq (N:_{R}M),$ then we are done. So, suppose
that $I_1I_3\subseteq\sqrt{(N:_{R}M)}$. Thus $I_1I_3M=I_1K\subseteq \sqrt{%
(N:_{R}M)}M=M$-$\mathrm{rad}(N)$. Similary if $I_2I_3\subseteq \sqrt{%
(N:_{R}M)}$, then we have $I_2K\subseteq M$-$\mathrm{rad}(N)$. It completes
the proof, by Theorem \ref{main}.
\end{proof}

The following example shows that Theorem \ref{conve} is not satisfied in
general.

\begin{example}
Consider the $\mathbb{Z} $-module $M=\mathbb{Z}\times \mathbb{Z}$ and $N=6%
\mathbb{Z} \times 0$ a submodule of $M$. Observe that $\mathbb{Z} \times 0$, 
$2\mathbb{Z} \times \mathbb{Z}$ and $3\mathbb{Z} \times \mathbb{Z}$ are some
of the prime submodules of $M$ containing $N$. Also $(N:_{\mathbb{Z}}M)=0$
is a 2-absorbing primary ideal of $\mathbb{Z} $. On the other hand, since $%
2.3.(1,0)=(6,0)\in N$, $2.3\notin (N:_{\mathbb{Z}}M)$, $2.(1,0)=(2,0)\notin
M $-$\mathrm{rad}(N)\subseteq (\mathbb{Z} \times 0)\cap (2\mathbb{Z} \times 
\mathbb{Z})\cap (3\mathbb{Z} \times \mathbb{Z})=6\mathbb{Z} \times 0=N$ and $%
3.(1,0)=(3,0)\notin M$-$\mathrm{rad}(N)=N$, so $N$ is not a 2-absorbing
primary submodule of $M$.
\end{example}

\begin{theorem}
\label{prod} Let $M$ be a multiplication $R$-module and $N_{1}$ and $N_{2}$
be primary submodules of $M$. Then $N_{1}\cap N_{2}$ is a 2-absorbing
primary submodule of $M$. If in addition $M$ is finitely generated faithful,
then $N_{1}N_{2}$ is a 2-absorbing primary submodule of $M$.
\end{theorem}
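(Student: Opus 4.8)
The plan is to reduce both assertions to purely ideal-theoretic statements about primary ideals of $R$ and then transfer them back to submodules using Theorem \ref{conve}. The engine for the reduction is the observation that whenever $N$ is a primary submodule of $M$, the colon ideal $(N:_{R}M)$ is a primary ideal of $R$: if $xy\in(N:_{R}M)$ with $y\notin\sqrt{(N:_{R}M)}$, then for every $m\in M$ we have $y(xm)=xym\in N$, so primariness of $N$ forces $xm\in N$; as $m$ was arbitrary, $xM\subseteq N$, i.e. $x\in(N:_{R}M)$. Thus $(N_{1}:_{R}M)$ and $(N_{2}:_{R}M)$ are primary ideals, with radicals the prime ideals $p_{1}=\sqrt{(N_{1}:_{R}M)}$ and $p_{2}=\sqrt{(N_{2}:_{R}M)}$.

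For the first claim I would start from the identity $(N_{1}\cap N_{2}:_{R}M)=(N_{1}:_{R}M)\cap(N_{2}:_{R}M)$, valid for any module. By the remark above this colon ideal is an intersection of two primary ideals, and the intersection of two primary ideals of a commutative ring is a $2$-absorbing primary ideal (the ideal-level fact established in \cite{Bt}; see also the sketch below). Hence $(N_{1}\cap N_{2}:_{R}M)$ is $2$-absorbing primary, and since $M$ is multiplication, Theorem \ref{conve} gives that $N_{1}\cap N_{2}$ is a $2$-absorbing primary submodule. Note that this part uses only that $M$ is multiplication.

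For the second claim I would exploit that a finitely generated faithful multiplication module is cancellation, so that $(IM:_{R}M)=I$ for every ideal $I$ of $R$. Writing $N_{i}=(N_{i}:_{R}M)M$ (possible since $M$ is multiplication) and using the definition of the product of submodules, one gets $N_{1}N_{2}=\big[(N_{1}:_{R}M)(N_{2}:_{R}M)\big]M$, whence $(N_{1}N_{2}:_{R}M)=(N_{1}:_{R}M)(N_{2}:_{R}M)$ by cancellation. This is a product of two primary ideals, which is again a $2$-absorbing primary ideal by the results of \cite{Bt}; Theorem \ref{conve} then upgrades this to the statement that $N_{1}N_{2}$ is a $2$-absorbing primary submodule.

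The genuine content, and the step I expect to be the main obstacle, is the ideal-theoretic fact that the intersection (resp. product) of two primary ideals $q_{1},q_{2}$ with radicals $p_{1},p_{2}$ is $2$-absorbing primary; the delicate case is $p_{1}\neq p_{2}$. Given $abc$ in the ideal, the clean way is to treat $c$ as the distinguished factor: from $abc\in q_{i}$ one gets $ab\in q_{i}$ or $c\in p_{i}$, and the only configuration not immediately settled is, say, $ab\in q_{1}\subseteq p_{1}$ with $c\in p_{2}$, where splitting $ab\in p_{1}$ into $a\in p_{1}$ or $b\in p_{1}$ and pairing with $c\in p_{2}$ forces $ac$ or $bc$ into $p_{1}\cap p_{2}=\sqrt{q_{1}\cap q_{2}}$; for the product one additionally checks that any $c\notin p_{1}\cup p_{2}$ is a nonzerodivisor modulo $q_{1}q_{2}$, which promotes $ab\in q_{1}\cap q_{2}$ to $ab\in q_{1}q_{2}$. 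It is worth stressing why I route everything through ideals rather than applying the submodule characterization Theorem \ref{main} directly: in an inclusion $I_{1}I_{2}K\subseteq N$ the factor $K$ is a submodule and cannot be taken as the distinguished scalar, so one is forced into exactly the split that leaves this delicate configuration unresolved, whereas at the level of ideals all three factors are interchangeable and the argument closes.
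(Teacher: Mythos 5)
Your main line is exactly the paper's proof: the paper likewise notes that $(N_{1}:_{R}M)$ and $(N_{2}:_{R}M)$ are primary ideals, cites \cite[Theorem 2.4]{Bt} for the fact that the intersection and the product of two primary ideals are $2$-absorbing primary, uses $(N_{1}\cap N_{2}:_{R}M)=(N_{1}:_{R}M)\cap(N_{2}:_{R}M)$ and, in the finitely generated faithful case, $(N_{1}N_{2}:_{R}M)=(N_{1}:_{R}M)(N_{2}:_{R}M)$ via cancellation, and then concludes by Theorem \ref{conve}. As long as you lean on the citation to \cite{Bt}, your proof is complete and coincides with the paper's; your explicit verifications of the two colon identities and of primariness of $(N_{i}:_{R}M)$, which the paper only asserts, are correct.

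However, the supplementary sketch of the cited ideal-level fact --- which you single out as ``the genuine content'' --- is wrong in the product case. The intersection argument is fine, but the lemma ``any $c\notin p_{1}\cup p_{2}$ is a nonzerodivisor modulo $q_{1}q_{2}$'' is false, even for $p_{1}\neq p_{2}$. Take $R=k[x,y,z]/(xy-z^{2})$ and the distinct primes $q_{1}=p_{1}=(x,z)$, $q_{2}=p_{2}=(y,z)$. Then $q_{1}q_{2}=(xy,xz,yz,z^{2})=(xz,yz,z^{2})=z\mathfrak{m}$ where $\mathfrak{m}=(x,y,z)$, and $c=x+y\notin p_{1}\cup p_{2}$ satisfies $cz\in q_{1}q_{2}$ while $z\notin q_{1}q_{2}$ (as $R$ is a domain, $z\in z\mathfrak{m}$ would force $1\in\mathfrak{m}$); so $\mathfrak{m}$ is an embedded associated prime of $q_{1}q_{2}$ not contained in $p_{1}\cup p_{2}$. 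With $a=z$, $b=1$, $c=x+y$ this is precisely your unresolved configuration: $ab\in q_{1}\cap q_{2}$, $abc\in q_{1}q_{2}$, $c\notin p_{1}\cup p_{2}$, yet $ab\notin q_{1}q_{2}$; the $2$-absorbing primary condition nonetheless holds here because $ac=z(x+y)\in p_{1}\cap p_{2}=\sqrt{q_{1}q_{2}}$, a branch your promotion step never reaches. The correct way to close the case $c\notin p_{1}\cup p_{2}$ is a finer split rather than a nonzerodivisor claim: primariness gives $ab\in q_{1}\cap q_{2}$ as you say; since $ab\in p_{1}\cap p_{2}$, either one of $a,b$ lies in $p_{1}\cap p_{2}$, in which case $ac$ or $bc$ lies in $\sqrt{q_{1}q_{2}}$ outright, or (up to symmetry) $a\in p_{1}\setminus p_{2}$ and $b\in p_{2}\setminus p_{1}$, and then primariness applies a second time: $ab\in q_{1}$ with $b\notin\sqrt{q_{1}}$ forces $a\in q_{1}$, and $ab\in q_{2}$ with $a\notin\sqrt{q_{2}}$ forces $b\in q_{2}$, whence $ab\in q_{1}q_{2}$. (When $p_{1}=p_{2}$ the product case is trivial anyway, since any ideal whose radical is prime is $2$-absorbing primary.) This does not affect the validity of your proof of the theorem itself, which rests on the citation, but the self-contained justification you propose for the key step would not survive as written.
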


\begin{proof}
Since $N_{1}$ and $N_{2}$ are primary submodules of $M$, then $(N_{1}:_{R}M)$
and $(N_{2}:_{R}M)$ are primary ideals of $R$. Hence $%
(N_{1}:_{R}M)(N_{2}:_{R}M)$ and $(N_{1}\cap
N_{2}:_{R}M)=(N_{1}:_{R}M)\cap(N_{2}:_{R}M)$ are 2-absorbing primary ideals
of $R$, by \textrm{\cite[Theorem 2.4]{Bt}}. Therefore, Theorem \ref{conve}
implies that $N_{1}\cap N_{2}$ is a 2-absorbing primary submodule of $M$. If 
$M$ is a finitely generated faithful multiplication $R$-module, then $%
(N_{1}N_{2}:_{R}M)=(N_{1}:_{R}M)(N_{2}:_{R}M)$. So, again by Theorem \ref%
{conve} we deduce that $N_{1}N_{2}$ is a 2-absorbing primary submodule of $M$%
.
\end{proof}

Let $M$ be a multiplication $R$-module and $N$ a primary submodule of $M$.
We know that $\sqrt{(N:_{R}M)}$ is a prime ideal of $R$ and so $P=M$-$%
\mathrm{rad}(N)=\sqrt{(N:_{R}M)}M$ is a prime submodule of $M$. In this case
we say that $N$ is a $P$-\textit{primary submodule of} $M$.

\begin{corollary}
\label{cprod} Let $M$ be a multiplication $R$-module and $P_{1}$ and $P_{2}$
be prime submodules of $M$. Suppose that $P_{1}^{n}$ is a $P_{1}$-primary
submodule of $M$ for some positive integer $n\geq1$ and $P_{2}^{m}$ is a $%
P_{2}$-primary submodule of $M$ for some positive integer $m\geq1$.

\begin{enumerate}
\item $P_{1}^{n}\cap P_{2}^{m}$ is a 2-absorbing primary submodule of $M$.

\item If in addition $M$ is finitely generated faithful, then $%
P_{1}^{n}P_{2}^{m}$ is a 2-absorbing primary submodule of $M$.
\end{enumerate}
\end{corollary}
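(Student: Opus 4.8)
The plan is to recognize that this corollary is a direct specialization of Theorem~\ref{prod}, so the work is almost entirely a matter of matching hypotheses. Recall from the discussion immediately preceding the statement that a submodule is called $P$-primary precisely when it is a primary submodule of $M$ whose $M$-radical equals the prime submodule $P$. Thus the hypotheses that $P_1^n$ is a $P_1$-primary submodule and that $P_2^m$ is a $P_2$-primary submodule assert, in particular, that both $P_1^n$ and $P_2^m$ are primary submodules of the multiplication module $M$. This reformulation is the one nontrivial observation in the argument.

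For part~(1), I would apply the first assertion of Theorem~\ref{prod} with $N_1 := P_1^n$ and $N_2 := P_2^m$. Since these are primary submodules of the multiplication $R$-module $M$, Theorem~\ref{prod} gives at once that $N_1 \cap N_2 = P_1^n \cap P_2^m$ is a $2$-absorbing primary submodule of $M$, which is exactly the desired conclusion.

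For part~(2), I would invoke the second assertion of Theorem~\ref{prod}, whose extra hypothesis is precisely that $M$ be a finitely generated faithful multiplication module. Under this additional assumption the product $N_1 N_2 = P_1^n P_2^m$ is again a $2$-absorbing primary submodule of $M$, and the proof is complete.

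The only step requiring any thought is the translation of the $P$-primary hypothesis into the statement that $P_1^n$ and $P_2^m$ are primary submodules, and this follows immediately from the definition of a $P$-primary submodule given in the text. Consequently there is no genuine obstacle here: both parts reduce verbatim to the two conclusions of Theorem~\ref{prod}, and the corollary records the useful special case in which the two primary submodules are prime powers.
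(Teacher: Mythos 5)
Your proposal is correct and matches the paper's intent exactly: the paper states this as an immediate corollary of Theorem~\ref{prod} (giving no separate proof), and your reduction --- noting that the $P_i$-primary hypotheses mean $P_1^n$ and $P_2^m$ are primary submodules, then applying the two parts of Theorem~\ref{prod} to their intersection and product --- is precisely the intended argument.
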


\begin{theorem}
Let $M$ be a multiplication $R$-module and $N$ be a submodule of $M$ that has a primary
decomposition. If $M$-$rad(N)=\mathfrak{M_1}\cap\mathfrak{M_2}$ where $%
\mathfrak{M_1}$ and $\mathfrak{M_2}$ are two maximal submodules of $M$, then 
$N$ is a 2-absorbing primary submodule of $M$.
\end{theorem}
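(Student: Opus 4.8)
The plan is to collapse the given primary decomposition of $N$ to an intersection of exactly two primary submodules and then quote Theorem \ref{prod}. I would first dispose of the degenerate case $\mathfrak{M}_1=\mathfrak{M}_2$: here $M$-$\mathrm{rad}(N)=\mathfrak{M}_1$ is a maximal, hence prime, submodule, so $N$ is $2$-absorbing primary by Theorem \ref{rad}. Thus assume $\mathfrak{M}_1\neq\mathfrak{M}_2$; being distinct maximal (proper) submodules they are incomparable. Write $\mathfrak{M}_i=\mathfrak{m}_iM$ for maximal ideals $\mathfrak{m}_i$ of $R$, using \cite[Theorem 2.5]{ES}.

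The key step is to determine every prime submodule of $M$ that contains $N$. Any such prime $P$ necessarily contains $M$-$\mathrm{rad}(N)=\mathfrak{M}_1\cap\mathfrak{M}_2$, and since $(\mathfrak{m}_1M)(\mathfrak{m}_2M)=\mathfrak{m}_1\mathfrak{m}_2M\subseteq\mathfrak{M}_1\cap\mathfrak{M}_2\subseteq P$ we get $\mathfrak{m}_1\mathfrak{m}_2\subseteq(P:_RM)$. Writing $P=\mathfrak{p}M$ with $\mathfrak{p}=(P:_RM)$ a prime ideal, this says $\mathfrak{m}_1\mathfrak{m}_2\subseteq\mathfrak{p}$, so $\mathfrak{m}_1\subseteq\mathfrak{p}$ or $\mathfrak{m}_2\subseteq\mathfrak{p}$, and maximality forces $\mathfrak{p}=\mathfrak{m}_1$ or $\mathfrak{p}=\mathfrak{m}_2$, i.e. $P=\mathfrak{M}_1$ or $P=\mathfrak{M}_2$. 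Hence $\mathfrak{M}_1$ and $\mathfrak{M}_2$ are the only prime submodules over $N$.

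Now I would bring in the primary decomposition $N=\bigcap_{j=1}^{n}Q_j$ with each $Q_j$ primary. Each $M$-$\mathrm{rad}(Q_j)=\sqrt{(Q_j:_RM)}M$ is a prime submodule containing $N$ (by \cite[Theorem 2.12]{ES} and \cite[Corollary 2.11]{ES}, using that $\sqrt{(Q_j:_RM)}$ is prime), hence equals $\mathfrak{M}_1$ or $\mathfrak{M}_2$ by the previous paragraph. Iterating Proposition \ref{sqrt}(3) gives $M$-$\mathrm{rad}(N)=\bigcap_jM$-$\mathrm{rad}(Q_j)$. Grouping the $Q_j$ by their radical and using that a finite intersection of primary submodules with a common radical is again primary (the classical combining step of primary decomposition, valid for modules), I can rewrite $N=Q^{(1)}\cap Q^{(2)}$, where $Q^{(i)}$ is primary with $M$-$\mathrm{rad}(Q^{(i)})=\mathfrak{M}_i$; both groups are nonempty, for otherwise $M$-$\mathrm{rad}(N)$ would be a single $\mathfrak{M}_i\neq\mathfrak{M}_1\cap\mathfrak{M}_2$. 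Applying Theorem \ref{prod} to the two primary submodules $Q^{(1)}$ and $Q^{(2)}$ then shows at once that $N=Q^{(1)}\cap Q^{(2)}$ is $2$-absorbing primary.

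I expect the middle step---identifying the primes over $N$---to be the crux, because it is where the hypotheses interact: one must pass between a prime submodule and its residual ideal and exploit that $\mathfrak{M}_1,\mathfrak{M}_2$ are maximal, so that no embedded prime (which would have to strictly contain one of the two maximal minimal primes) can occur. Once this forces the decomposition down to two primary components, the conclusion is purely formal via Theorem \ref{prod}; notably the argument uses only the multiplication hypothesis and runs through the prime/primary structure, so neither finite generation nor faithfulness of $M$ is required.
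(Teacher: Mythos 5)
Your proposal is correct and follows essentially the same route as the paper: use Proposition \ref{sqrt}(3) to see that the $M$-radicals of the primary components are prime submodules intersecting to $\mathfrak{M}_1\cap\mathfrak{M}_2$, force each to be $\mathfrak{M}_1$ or $\mathfrak{M}_2$, group the components into two primary submodules $K_1,K_2$ with these radicals (the classical combining step, which the paper cites as \cite[Lemma 1.2.2]{Ash}), and conclude by the intersection part of Theorem \ref{prod}. The only differences are cosmetic: where the paper invokes \cite[Theorem 3.16]{Am} to identify the set of radicals with $\{\mathfrak{M}_1,\mathfrak{M}_2\}$, you give a short direct argument (any prime $P\supseteq N$ satisfies $\mathfrak{m}_1\mathfrak{m}_2\subseteq(P:_RM)$, forcing $P=\mathfrak{M}_i$), and you explicitly dispose of the degenerate case $\mathfrak{M}_1=\mathfrak{M}_2$ via Theorem \ref{rad}, which the paper leaves implicit.
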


\begin{proof}
Assume that $N=N_1\cap\cdots\cap N_n$ is a primary decomposition. By
Proposition \ref{sqrt}(3), $M$-$\mathrm{rad}(N)=M$-$\mathrm{rad}%
(N_1)\cap\cdots\cap M$-$\mathrm{rad}(N_n)=\mathfrak{M_1}\cap\mathfrak{M_2}$.
Since $M$-$\mathrm{rad}(N_i)$'s are prime submodules of $M$, then $\{M$-$%
\mathrm{rad}(N_1),...,M$-$\mathrm{rad}(N_n)\}= \{\mathfrak{M_1},\mathfrak{M_2%
}\}$, by \cite[Theorem 3.16]{Am}. Without loss of generality we may assume
that for some $1\leq t< n$, $\{M$-$\mathrm{rad}(N_1),...,M$-$\mathrm{rad}%
(N_t)\}= \{\mathfrak{M_1}\}$ and $\{M$-$\mathrm{rad}(N_{t+1}),...,M$-$%
\mathrm{rad}(N_n)\}= \{\mathfrak{M_2}\}$. Set $K_1:=N_1\cap\cdots\cap N_t$
and $K_2:=N_{t+1}\cap\cdots\cap N_n$. By \cite[Lemma 1.2.2]{Ash}, $K_1$ is
an $\mathfrak{M_1}$-primary submodule and $K_2$ is an $\mathfrak{M_2}$%
-primary submodule of $M$. Therefore, by Theorem \ref{prod}, $N=K_1\cap K_2$ is
2-absorbing primary.
\end{proof}

\begin{lemma}
$($\textrm{\cite[Corollary 1.3]{Mcc}}$)$\label{equal} Let $M$ and $%
M^{\prime} $ be $R$-modules with $f:M\to M^{\prime}$ an $R$-module
epimorphism. If $N$ is a submodule of $M$ containing $Ker(f)$, then $f(M$-${%
rad}(N))=M^{\prime}$-${rad}(f(N))$.
\end{lemma}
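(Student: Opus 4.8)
The plan is to reduce the claimed equality of $M$-radicals to two ingredients: the standard order-preserving correspondence between submodules of $M$ containing $\ker f$ and submodules of $M'$, and a set-theoretic observation that a surjection commutes with intersections of submodules lying above its kernel. Write $K=\ker f$. Since $K\subseteq N$ by hypothesis, every prime submodule of $M$ containing $N$ automatically contains $K$, so throughout we may restrict attention to submodules above $K$.

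First I would record the key set-theoretic fact: if $\{P_i\}_{i\in\Lambda}$ is any family of submodules of $M$ with $K\subseteq P_i$ for all $i$, then $f\bigl(\bigcap_{i}P_i\bigr)=\bigcap_{i}f(P_i)$. The inclusion $\subseteq$ holds for any map. For $\supseteq$, given $y\in\bigcap_i f(P_i)$, fix an index $i_0$ and choose $x\in P_{i_0}$ with $f(x)=y$; for every other $j$ pick $x_j\in P_j$ with $f(x_j)=y$, so that $x-x_j\in K\subseteq P_j$ and hence $x\in P_j$. Thus $x\in\bigcap_i P_i$ and $y=f(x)\in f\bigl(\bigcap_i P_i\bigr)$. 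This is exactly the step where the assumption $K\subseteq N$ does the work, and I expect it to be the technical heart of the argument.

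Next I would verify that $P\mapsto f(P)$ is a bijection from the prime submodules of $M$ containing $N$ onto the prime submodules of $M'$ containing $f(N)$, with inverse $Q\mapsto f^{-1}(Q)$. This rests on the familiar fact that under a surjection the prime submodules above the kernel correspond to prime submodules of the image; one checks that $P\supseteq N$ gives $f(P)\supseteq f(N)$, while $Q\supseteq f(N)$ gives $f^{-1}(Q)\supseteq f^{-1}(f(N))=N+K=N$, again using $K\subseteq N$. Surjectivity yields $f(f^{-1}(Q))=Q$ and the containment $K\subseteq P$ yields $f^{-1}(f(P))=P$, so the two assignments are mutually inverse.

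Finally I would combine these. By definition $M\text{-}\mathrm{rad}(N)=\bigcap\{P:P\text{ prime in }M,\ N\subseteq P\}$, and since each such $P$ contains $K$, the first step gives $f(M\text{-}\mathrm{rad}(N))=\bigcap_P f(P)$. By the bijection of the previous step this intersection ranges precisely over the prime submodules $Q$ of $M'$ containing $f(N)$, hence equals $M'\text{-}\mathrm{rad}(f(N))$. It remains to treat the degenerate case in which $M$ has no prime submodule containing $N$, so that $M\text{-}\mathrm{rad}(N)=M$ by convention; then the bijection forces $M'$ to have no prime submodule containing $f(N)$ either, whence $M'\text{-}\mathrm{rad}(f(N))=M'=f(M)$, and the desired equality still holds.
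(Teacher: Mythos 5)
Your proof is correct, but there is nothing in the paper to compare it against: the paper states this lemma with the citation \cite[Corollary 1.3]{Mcc} and gives no proof of its own, so your argument plays the role of filling in that citation, and it does so along the standard lines. Both of your ingredients check out. The set-theoretic fact that a surjection commutes with a nonempty intersection of submodules containing the kernel is proved correctly, and the hypothesis $\ker f\subseteq N$ enters exactly where you say it does (every prime above $N$ lies above $\ker f$, so the family qualifies). The correspondence $P\mapsto f(P)$, $Q\mapsto f^{-1}(Q)$ between prime submodules of $M$ containing $N$ and prime submodules of $M'$ containing $f(N)$ is the usual quotient correspondence via $M'\cong M/\ker f$; the one verification you delegate to the ``familiar fact'' -- that $f(P)$ is proper and prime when $\ker f\subseteq P$ -- is a genuine fact and a two-line check, using that $(f(P):_{R}M')=(P:_{R}M)$ and that $f(m)\in f(P)$ forces $m\in P$ when $\ker f\subseteq P$; it would be worth writing out if the proof were to stand alone. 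Your handling of the degenerate convention $M$-$\mathrm{rad}(N)=M$ is also right, since the correspondence shows $M$ has a prime submodule above $N$ if and only if $M'$ has one above $f(N)$, and surjectivity gives $f(M)=M'$ in that case. In short: a complete, self-contained proof of a result the paper only quotes.
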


\begin{theorem}
\label{im} Let $f:M\to M^{\prime}$ be a homomorphism of $R$-modules.

\begin{enumerate}
\item If $N^{\prime}$ is a 2-absorbing primary submodule of $M^{\prime}$,
then $f^{-1}(N^{\prime})$ is a 2-absorbing primary submodule of $M$.

\item If $f$ is epimorphism and $N$ is a 2-absorbing primary submodule of $M$
containing $Ker(f)$, then $f(N)$ is a 2-absorbing primary submodule of $%
M^{\prime}$.
\end{enumerate}
\end{theorem}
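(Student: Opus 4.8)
The plan is to prove the two parts separately, because part (2) follows cleanly from the radical lemma while part (1) hides the real difficulty. For part (2), I would first note that $f(N)$ is proper: since $f$ is surjective and $\mathrm{Ker}(f)\subseteq N$, we have $f^{-1}(f(N))=N\neq M$, so $f(N)\neq M'$. Now take $a,b\in R$ and $m'\in M'$ with $abm'\in f(N)$. Writing $m'=f(m)$ by surjectivity, we get $f(abm)=abm'\in f(N)$, and since $\mathrm{Ker}(f)\subseteq N$ this forces $abm\in N$. As $N$ is $2$-absorbing primary, $am\in M\text{-}\mathrm{rad}(N)$ or $bm\in M\text{-}\mathrm{rad}(N)$ or $ab\in(N:_RM)$. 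In the first case I apply Lemma \ref{equal} (valid since $f$ is an epimorphism and $N\supseteq\mathrm{Ker}(f)$) to obtain $am'=f(am)\in f(M\text{-}\mathrm{rad}(N))=M'\text{-}\mathrm{rad}(f(N))$; the second case is symmetric; in the third case $abM\subseteq N$ gives $abM'=f(abM)\subseteq f(N)$, i.e.\ $ab\in(f(N):_RM')$. This settles (2).

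For part (1) I would factor $f=\iota\circ g$, where $g\colon M\twoheadrightarrow f(M)=:\overline{M}$ is the corestriction and $\iota\colon\overline{M}\hookrightarrow M'$ is the inclusion, and record $f^{-1}(N')=g^{-1}(N'\cap\overline{M})$, assuming $f(M)\not\subseteq N'$ so that $f^{-1}(N')$ is proper. The epimorphism step is essentially the converse of part (2): if one knows that $N'\cap\overline{M}$ is a $2$-absorbing primary submodule of $\overline{M}$, then because $\mathrm{Ker}(g)\subseteq g^{-1}(N'\cap\overline{M})$, Lemma \ref{equal} gives $g^{-1}(\overline{M}\text{-}\mathrm{rad}(N'\cap\overline{M}))=M\text{-}\mathrm{rad}(f^{-1}(N'))$, and pulling $abm$ back through $g$ (using $(g^{-1}(N'\cap\overline{M}):_RM)=(N'\cap\overline{M}:_R\overline{M})$, which holds since $g$ is onto $\overline{M}$) shows directly that $f^{-1}(N')$ is $2$-absorbing primary in $M$. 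Thus part (1) reduces to the inclusion step: proving that $N'\cap\overline{M}$ is $2$-absorbing primary in $\overline{M}$.

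The inclusion step is where I expect the main obstacle. Given $abm\in N'\cap\overline{M}$ with $m\in\overline{M}$ and $ab\notin(N'\cap\overline{M}:_R\overline{M})$, one gets $ab\notin(N':_RM')$ (because $(N':_RM')\subseteq(N'\cap\overline{M}:_R\overline{M})$), so the $2$-absorbing primary property of $N'$ yields $am\in M'\text{-}\mathrm{rad}(N')$ or $bm\in M'\text{-}\mathrm{rad}(N')$. The delicate point is that these memberships live in $M'\text{-}\mathrm{rad}(N')$, while the conclusion demands membership in $\overline{M}\text{-}\mathrm{rad}(N'\cap\overline{M})$, and the easy inclusion runs the wrong way: from ``$\iota^{-1}$ of a prime submodule is prime'' one only obtains $M\text{-}\mathrm{rad}(f^{-1}(N'))\subseteq f^{-1}(M'\text{-}\mathrm{rad}(N'))$, not the reverse one actually needs. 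So the crux is either to establish $M'\text{-}\mathrm{rad}(N')\cap\overline{M}\subseteq\overline{M}\text{-}\mathrm{rad}(N'\cap\overline{M})$, or to sidestep it by arguing directly with prime submodules $\widetilde{P}\supseteq N'\cap\overline{M}$ of $\overline{M}$: if $am\notin\widetilde{P}$, then primeness of $\widetilde{P}$ applied to $b(am)=abm\in\widetilde{P}$ forces $b\in(\widetilde{P}:_R\overline{M})$ and hence $bm\in\widetilde{P}$, and symmetrically, so one must control how the residuals $(\widetilde{P}:_R\overline{M})$ interact with the hypothesis $ab\notin(N'\cap\overline{M}:_R\overline{M})$. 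Showing that the two radical conditions cannot both fail once the residual condition fails is the step I expect to require the most care, and it is the natural place to bring in any additional structure (e.g.\ a multiplication hypothesis, under which $M\text{-}\mathrm{rad}$ is governed by $\sqrt{(\,\cdot\,:_RM)}$ via Lemma \ref{finmul} and the radical commutes with preimages).
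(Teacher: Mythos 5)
Your part (2) is correct and is essentially the paper's own argument: pull $m'$ back through the epimorphism, use $\mathrm{Ker}(f)\subseteq N$ to conclude $abm\in N$, and transport the three alternatives via Lemma \ref{equal}; your properness check $f^{-1}(f(N))=N\neq M$ is a worthwhile detail that the paper leaves tacit.

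For part (1) you do not have a proof, but you have located the obstruction exactly, and it is worth telling you what the paper does at that point: it simply asserts, without justification, the inclusion $f^{-1}(M'$-$\mathrm{rad}(N'))\subseteq M$-$\mathrm{rad}(f^{-1}(N'))$ and finishes in one line --- that is, it commits precisely the wrong-way inclusion you warned against. As you note, for an arbitrary homomorphism only $M$-$\mathrm{rad}(f^{-1}(N'))\subseteq f^{-1}(M'$-$\mathrm{rad}(N'))$ is available (preimages of prime submodules are prime or improper), and the gap cannot be closed, because statement (1) is false in this generality. Take $R=M=\mathbb{Z}$, $M'=\mathbb{Q}$, $f$ the inclusion, and $N'=30\mathbb{Z}$. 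The only prime submodule of $\mathbb{Q}$ is $0$: for any proper submodule $P$ one has $(P:_{\mathbb{Z}}\mathbb{Q})=0$ since $\mathbb{Q}$ is divisible, and $\mathbb{Q}/P$ is torsion whenever $P\neq0$. Hence no prime submodule of $\mathbb{Q}$ contains $N'$, so $\mathbb{Q}$-$\mathrm{rad}(N')=\mathbb{Q}$ by the paper's convention, and $N'$ is vacuously a $2$-absorbing primary submodule of $\mathbb{Q}$. But $f^{-1}(N')=30\mathbb{Z}$ is not $2$-absorbing primary in $\mathbb{Z}$: here $\mathbb{Z}$-$\mathrm{rad}(30\mathbb{Z})=2\mathbb{Z}\cap3\mathbb{Z}\cap5\mathbb{Z}=30\mathbb{Z}$, and $2\cdot3\cdot5\in30\mathbb{Z}$ while $2\cdot5\notin30\mathbb{Z}$, $3\cdot5\notin30\mathbb{Z}$ and $2\cdot3\notin(30\mathbb{Z}:_{\mathbb{Z}}\mathbb{Z})=30\mathbb{Z}$.

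So the honest summary is: your factorization of $f$ through its image is sound, and your surjective half is a correct converse-of-(2) argument, since for an epimorphism $g$ with $\mathrm{Ker}(g)$ contained in the relevant submodule, Lemma \ref{equal} upgrades the inclusion to an equality of radicals; thus part (1) is true when $f$ is surjective (and, trivially, one must anyway assume $f(M)\nsubseteq N'$ so that $f^{-1}(N')$ is proper). The failure is confined to the inclusion step, exactly where you predicted, and any repair must impose hypotheses forcing $M'$-$\mathrm{rad}(N')\cap f(M)\subseteq$ the radical computed downstairs; note that such hypotheses must constrain $M'$ and not just $M$, since in the counterexample $M=\mathbb{Z}$ is already a finitely generated multiplication module, while $\mathbb{Q}$ is not a multiplication $\mathbb{Z}$-module.
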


\begin{proof}
$(1)$ Let $a,b\in R$ and $m\in M$ such that $abm\in f^{-1}(N^{\prime})$.
Then $abf(m)\in N^{\prime}$. Hence $ab\in(N^{\prime}:_{R}M^{\prime})$ or $%
af(m)\in M^{\prime}$-$\mathrm{rad}(N^{\prime})$ or $bf(m)\in M^{\prime}$-$%
\mathrm{rad}(N^{\prime})$, and thus $ab\in(f^{-1}(N^{\prime}):_{R}M)$ or $%
am\in f^{-1}(M^{\prime}$-$\mathrm{rad}(N^{\prime}))$ or $bm\in
f^{-1}(M^{\prime}$-$\mathrm{rad}(N^{\prime}))$. By using the inclusion $%
f^{-1}(M^{\prime}$-$\mathrm{rad}(N^{\prime}))\subseteq M$-$\mathrm{rad}%
(f^{-1}(N^{\prime}))$, we conclude that $f^{-1}(N^{\prime})$ is a $2$%
-absorbing primary submodule of $M$.\newline
$(2)$ Let $a,b\in R$, $m^{\prime}\in M^{\prime}$ and $abm^{\prime}\in f(N)$.
By assumption there exists $m\in M$ such that $m^{\prime}=f(m)$ and so $%
f(abm)\in f(N)$. Since $Ker(f)\subseteq N$, we have $abm\in N$. It implies
that $ab\in(N:_{R}M)$ or $am\in M$-$\mathrm{rad}(N)$ or $bm\in M$-$\mathrm{%
rad}(N)$. Hence $ab\in(f(N):_{R}M^{\prime})$ or $am^{\prime}\in f(M$-$%
\mathrm{rad}(N))=M^{\prime}$-$\mathrm{rad}(f(N))$ or $bm^{\prime}\in f(M$-$%
\mathrm{rad}(N))=M^{\prime}$-$\mathrm{rad}(f(N))$. Consequently $f(N)$ is a $%
2$-absorbing primary submodule of $M^{\prime}$.
\end{proof}

As an immediate consequence of Theorem \ref{im}(2) we have the following
Corollary.

\begin{corollary}
\label{quo} Let $M$ be an $R$-module and $L\subseteq N$ be submodules of $M$%
. If $N$ is a $2$-absorbing primary submodule of $M$, then $N/L$ is a
2-absorbing primary submodule of $M/L$.
\end{corollary}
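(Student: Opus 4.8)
The plan is to realize $N/L$ as a homomorphic image of $N$ and then invoke Theorem \ref{im}(2) directly. Concretely, I would take $f$ to be the canonical projection $\pi\colon M\to M/L$ given by $\pi(m)=m+L$. This is an $R$-module epimorphism whose kernel is precisely $L$, and by construction it satisfies $\pi(N)=N/L$ and $\pi(M)=M/L$.

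Next I would verify that the hypotheses of Theorem \ref{im}(2) are met. By assumption $N$ is a $2$-absorbing primary submodule of $M$, and the standing hypothesis $L\subseteq N$ says exactly that $N$ contains $\mathrm{Ker}(\pi)=L$. Thus $\pi$ is an epimorphism and $N$ is a $2$-absorbing primary submodule containing $\mathrm{Ker}(\pi)$, so Theorem \ref{im}(2) applies verbatim and yields that $\pi(N)=N/L$ is a $2$-absorbing primary submodule of $\pi(M)=M/L$, which is the desired conclusion.

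There is essentially no obstacle here: the argument is a one-line specialization of Theorem \ref{im}(2) to the quotient map. The only minor point worth recording is that $N/L$ is genuinely proper in $M/L$; this follows from $N\neq M$ (a consequence of $N$ being a $2$-absorbing primary, hence proper, submodule) together with the order-preserving correspondence between submodules of $M$ containing $L$ and submodules of $M/L$ induced by $\pi$.
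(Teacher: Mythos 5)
Your proposal is correct and is exactly the paper's argument: the authors state the corollary as ``an immediate consequence of Theorem \ref{im}(2),'' i.e.\ the same application of that theorem to the canonical projection $\pi\colon M\to M/L$, whose kernel $L$ is contained in $N$ by hypothesis. Your added remark on properness of $N/L$ is a harmless (and valid) extra detail the paper leaves implicit.
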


\begin{theorem}
Let $K$ and $N$ be submodules of $M$ with $K\subset N\subset M$. If $K$ is a
2-absorbing primary submodule of $M$ and $N/K$ is a weakly 2-absorbing
primary submodule of $M/K$, then $N$ is a 2-absorbing primary submodule of $M
$.
\end{theorem}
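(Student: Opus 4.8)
The plan is to test the $2$-absorbing primary condition for $N$ directly, splitting into two cases and reducing each to one of the two hypotheses. So, let $a,b\in R$ and $m\in M$ with $abm\in N$, and write $\bar m=m+K\in M/K$; then $ab\bar m=abm+K\in N/K$. Everything rests on two translation facts that I would establish first. The first is the residual identity: since $K\subseteq N$, one checks immediately that $r(M/K)\subseteq N/K$ is equivalent to $rM\subseteq N$, so $(N/K:_R M/K)=(N:_R M)$. The second comes from applying Lemma \ref{equal} to the canonical epimorphism $\pi:M\to M/K$, whose kernel $K$ is contained in $N$; this gives $\pi(M\text{-}\mathrm{rad}(N))=(M/K)\text{-}\mathrm{rad}(N/K)$. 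Because $K\subseteq N\subseteq M\text{-}\mathrm{rad}(N)$, the left side equals $(M\text{-}\mathrm{rad}(N))/K$, and hence for any $x\in M$ one has $x+K\in (M/K)\text{-}\mathrm{rad}(N/K)$ if and only if $x\in M\text{-}\mathrm{rad}(N)$.

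First I would dispose of the case $abm\notin K$, that is, $0\neq ab\bar m\in N/K$. Here the weakly $2$-absorbing primary hypothesis on $N/K$ applies and yields $a\bar m\in (M/K)\text{-}\mathrm{rad}(N/K)$, or $b\bar m\in (M/K)\text{-}\mathrm{rad}(N/K)$, or $ab\in (N/K:_R M/K)$. Feeding each alternative through the two translation facts above converts it respectively into $am\in M\text{-}\mathrm{rad}(N)$, or $bm\in M\text{-}\mathrm{rad}(N)$, or $ab\in (N:_R M)$, which is exactly the $2$-absorbing primary conclusion for $N$.

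The remaining case is $abm\in K$. Now I would invoke the hypothesis that $K$ itself is a $2$-absorbing primary submodule of $M$, which gives $am\in M\text{-}\mathrm{rad}(K)$, or $bm\in M\text{-}\mathrm{rad}(K)$, or $ab\in (K:_R M)$. Since $K\subseteq N$, every prime submodule containing $N$ also contains $K$, so $M\text{-}\mathrm{rad}(K)\subseteq M\text{-}\mathrm{rad}(N)$, and likewise $(K:_R M)\subseteq (N:_R M)$. Thus each alternative again lands in one of the three required conclusions, and combining the two cases shows $N$ is $2$-absorbing primary.

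There is no serious analytic difficulty here; the argument is a clean dichotomy. The point that must be handled with care is that the \emph{weakly} hypothesis on $N/K$ only controls elements with $abm\notin K$, while the complementary regime $abm\in K$ is precisely where the full strength of the hypothesis on $K$ is needed. It is exactly this interplay—together with the radical pushforward identity of Lemma \ref{equal}—that makes the proof work, so I would be most careful to verify the residual identity $(N/K:_R M/K)=(N:_R M)$ and the equivalence $x+K\in (M/K)\text{-}\mathrm{rad}(N/K)\Leftrightarrow x\in M\text{-}\mathrm{rad}(N)$ before running the case analysis.
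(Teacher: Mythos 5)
Your proof is correct and follows essentially the same route as the paper: the same dichotomy on whether $abm\in K$, using the $2$-absorbing primary hypothesis on $K$ in the first case and the weakly $2$-absorbing primary hypothesis on $N/K$ in the second, together with the identities $(N/K:_R M/K)=(N:_R M)$ and $(M/K)\text{-}\mathrm{rad}(N/K)=(M\text{-}\mathrm{rad}(N))/K$. If anything, you are more careful than the paper, which asserts the radical identity without justification, whereas you derive it from Lemma \ref{equal} applied to the canonical epimorphism $M\to M/K$.
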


\begin{proof}
Let $a,b\in R$, $m\in M$ and $abm\in N$. If $abm\in K$, then $am\in M$-$%
\mathrm{rad}(K)\subseteq M$-$\mathrm{rad}(N)$ or $bm\in M$-$\mathrm{rad}%
(K)\subseteq M$-$\mathrm{rad}(N)$ or $ab\in (K:_{R}M)\subseteq (N:_{R}M)$ as
it is needed.\newline
So suppose that $abm\not\in K$. Then $0\neq ab(m+K)\in N/K$ that implies, $%
a(m+K)\in M/K$-$\mathrm{rad}(N/K)=\frac{M-\mathrm{rad}(N)}{K}$ or $b(m+K)\in
M/K$-$\mathrm{rad}(N/K)$ or $ab\in (N/K:_{R}M/K)$. It means that $am\in M$-$%
\mathrm{rad}(N)$ or $bm\in M$-$\mathrm{rad}(N)$ or $ab\in (N:_{R}M)$, which
completes the proof.
\end{proof}

Let $R_i$ be a commutative ring with identity and $M_i$ be an $R_i$-module,
for $i = 1, 2$. Let $R=R_{1}\times R_{2}$. Then $M=M_{1}\times M_{2}$ is an $%
R$-module and each submodule of $M$ is of the form $N=N_{1}\times N_{2}$ for
some submodules $N_1$ of $M_1$ and $N_2$ of $M_2$. In addition, if $M_i$ is
a multiplication $R_i$-module, for $i = 1, 2$, then $M$ is a multiplication $%
R$-module. In this case, for each submodule $N=N_{1}\times N_{2}$ of $M$ we
have $M$-$\mathrm{rad}(N)=M_{1}$-$\mathrm{rad}(N_{1})\times M_{2}$-$\mathrm{%
rad}(N_{2})$.

\begin{theorem}
\label{cart}Let $R=R_{1}\times R_{2}$ and $M=M_{1}\times M_{2}$ where $M_{1}$
is a multiplication $R_{1}$-module and $M_{2}$ is a multiplication $R_{2}$%
-module.

\begin{enumerate}
\item A proper submodule $K_{1}$ of $M_{1}$ is a 2-absorbing primary
submodule if and only if $N=K_{1}\times M_{2}$ is a 2-absorbing primary
submodule of $M$.

\item A proper submodule $K_{2}$ of $M_{2}$ is a 2-absorbing primary
submodule if and only if $N=M_{1}\times K_{2}$ is a 2-absorbing primary
submodule of $M$.

\item If $K_{1}$ is a primary submodule of $M_{1}$ and $K_{2}$ is a primary
submodule of $M_{2}$, then $N=K_{1}\times K_{2}$ is a 2-absorbing primary
submodule of $M.$
\end{enumerate}
\end{theorem}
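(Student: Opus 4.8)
The plan is to reduce everything to the two coordinate descriptions recorded just before the statement,
$$M\text{-}\mathrm{rad}(N_1\times N_2)=M_1\text{-}\mathrm{rad}(N_1)\times M_2\text{-}\mathrm{rad}(N_2),$$
together with the residual formulas $(K_1\times M_2:_RM)=(K_1:_{R_1}M_1)\times R_2$, $(M_1\times K_2:_RM)=R_1\times(K_2:_{R_2}M_2)$, and $(K_1\times K_2:_RM)=(K_1:_{R_1}M_1)\times(K_2:_{R_2}M_2)$, all of which follow at once from the componentwise action of $R=R_1\times R_2$ on $M=M_1\times M_2$. I would also record that $M$ is a multiplication $R$-module, so that Theorems \ref{conve} and \ref{prod} are available.

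For \textbf{part (1)} I would argue directly from the definition. For the forward implication, take $a=(a_1,a_2)$, $b=(b_1,b_2)\in R$ and $m=(m_1,m_2)\in M$ with $abm\in K_1\times M_2$; the only constraint this imposes is $a_1b_1m_1\in K_1$. Applying the hypothesis that $K_1$ is $2$-absorbing primary in $M_1$ gives $a_1m_1\in M_1\text{-}\mathrm{rad}(K_1)$, or $b_1m_1\in M_1\text{-}\mathrm{rad}(K_1)$, or $a_1b_1\in(K_1:_{R_1}M_1)$; feeding these through the radical and residual formulas above yields exactly $am\in M\text{-}\mathrm{rad}(N)$, $bm\in M\text{-}\mathrm{rad}(N)$, or $ab\in(N:_RM)$. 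For the converse, given $a_1b_1m_1\in K_1$ I would test $N$ on the elements $(a_1,1),(b_1,1)\in R$ and $(m_1,0)\in M$; since $(a_1b_1m_1,0)\in N$, the three conclusions for $N$ collapse (via the same two formulas) to the three possibilities that make $K_1$ $2$-absorbing primary, while properness of $K_1$ follows from properness of $N$. \textbf{Part (2)} is identical after exchanging the roles of the two coordinates.

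For \textbf{part (3)} the naive attempt fails, and this is the main point: $K_1\times K_2$ is itself \emph{not} primary, because $\sqrt{(K_1\times K_2:_RM)}=\sqrt{(K_1:_{R_1}M_1)}\times\sqrt{(K_2:_{R_2}M_2)}$ is never prime in a nontrivial product ring. The idea is therefore to realise $N$ as an intersection of two genuinely primary submodules and invoke Theorem \ref{prod}. Concretely, I would show that $K_1\times M_2$ and $M_1\times K_2$ are each primary submodules of $M$: if $(r_1,r_2)(m_1,m_2)\in K_1\times M_2$ with $(m_1,m_2)\notin K_1\times M_2$, then $m_1\notin K_1$ and $r_1m_1\in K_1$, so primaryness of $K_1$ forces $r_1\in\sqrt{(K_1:_{R_1}M_1)}$, i.e. $(r_1,r_2)\in\sqrt{(K_1\times M_2:_RM)}$, and symmetrically for $M_1\times K_2$. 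Since $N=(K_1\times M_2)\cap(M_1\times K_2)$ and $M$ is multiplication, Theorem \ref{prod} gives that $N$ is $2$-absorbing primary.

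The only genuine obstacle is this decomposition step in part (3); everything else is bookkeeping with the two coordinate formulas. Equivalently, one could stay at the level of ideals: $(N:_RM)=\big((K_1:_{R_1}M_1)\times R_2\big)\cap\big(R_1\times(K_2:_{R_2}M_2)\big)$ is an intersection of two primary ideals of $R$, hence $2$-absorbing primary by \cite[Theorem 2.4]{Bt}, and then Theorem \ref{conve} applies; I would present whichever of the two routes is shorter given the lemmas already in hand.
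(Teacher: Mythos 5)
Your proposal is correct, and for part (3) it takes the same route as the paper: the paper also writes $N=(K_{1}\times M_{2})\cap(M_{1}\times K_{2})$ and invokes Theorem \ref{prod}. The differences are in parts (1)--(2) and in one point of care in (3). For (1), the paper proves the nontrivial direction ($N$ $2$-absorbing primary $\Rightarrow$ $K_{1}$ $2$-absorbing primary) by passing to the quotient $M^{\prime}=M/(\{0\}\times M_{2})\cong M_{1}$ and applying Corollary \ref{quo}, and it dismisses the converse as clear; you instead verify both directions directly from the coordinate formulas $M\text{-}\mathrm{rad}(N_{1}\times N_{2})=M_{1}\text{-}\mathrm{rad}(N_{1})\times M_{2}\text{-}\mathrm{rad}(N_{2})$ and $(K_{1}\times M_{2}:_{R}M)=(K_{1}:_{R_{1}}M_{1})\times R_{2}$, using the test elements $(a_{1},1),(b_{1},1),(m_{1},0)$ for the converse. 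Both arguments are sound; the paper's quotient trick recycles machinery already established (Theorem \ref{im} and Corollary \ref{quo}), while your componentwise check is more elementary and self-contained, and it actually supplies the converse the paper leaves as ``clear.'' In (3), the paper's citation of ``parts (1) and (2)'' is loose: Theorem \ref{prod} requires the two intersected submodules to be \emph{primary}, whereas parts (1)--(2) only yield $2$-absorbing primary, so a separate verification that $K_{1}\times M_{2}$ and $M_{1}\times K_{2}$ are primary is genuinely needed --- and your direct check (from $r_{1}m_{1}\in K_{1}$ and $m_{1}\notin K_{1}$, primaryness of $K_{1}$ forces $r_{1}\in\sqrt{(K_{1}:_{R_{1}}M_{1})}$, i.e.\ $(r_{1},r_{2})\in\sqrt{(K_{1}\times M_{2}:_{R}M)}$) is exactly that missing step, making your writeup more careful than the paper's on this point. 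Your alternative ideal-level route for (3) --- $(N:_{R}M)$ is an intersection of the two primary ideals $(K_{1}:_{R_{1}}M_{1})\times R_{2}$ and $R_{1}\times(K_{2}:_{R_{2}}M_{2})$, hence $2$-absorbing primary by \cite[Theorem 2.4]{Bt}, and then Theorem \ref{conve} applies since $M$ is multiplication --- is also valid and in fact mirrors how the paper proves Theorem \ref{prod} itself, so either presentation would do.
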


\begin{proof}
(1) Suppose that $N=K_{1}\times M_{2}$\ is a 2-absorbing primary submodule
of $M$. From our hypothesis, $N$ is proper, so $K_{1}\not=M_{1}.$ Set $%
M^{\prime }=\frac{M}{\{0\}\times M_{2}}$. Hence $N^{\prime }=\frac{N}{%
\{0\}\times M_{2}}$ is a 2-absorbing primary submodule of $M^{\prime }$ by
Corollary \ref{quo}. Also observe that $M^{\prime }\cong M_{1}$ and $%
N^{\prime }\cong K_{1}$. Thus $K_{1}$ is a 2-absorbing primary submodule of $%
M_{1}.$ Conversely, if $K_{1}$ is a 2-absorbing primary submodule of $M_{1},$
then it is clear that $N=K_{1}\times M_{2}$ is a 2-absorbing primary
submodule of $M$.\newline
(2) It can be easily verified similar to (1).\newline
(3) Assume that $N=K_{1}\times K_{2}$ where $K_{1}$ and $K_{2}$ are primary
submodules of $M_{1}$ and $M_{2}$, respectively. Hence $(K_{1}\times
M_{2})\cap (M_{1}\times K_{2})=K_{1}\times K_{2}=N$ is a 2-absorbing primary
submodule of $M$, by parts (1) and (2) and Theorem \ref{prod}.
\end{proof}

\begin{theorem}
Let $R=R_{1}\times R_{2}$ and $M=M_{1}\times M_{2}$ be a finitely generated
multiplication $R$-module where $M_{1}$ is a multiplication $R_{1}$-module
and $M_{2}$ is a multiplication $R_{2}$-module. If $N=N_{1}\times N_{2}$ is
a proper submodule of $M$, then the followings are equivalent.

\begin{enumerate}
\item $N$ is a 2-absorbing primary submodule of $M.$

\item $N_{1}=M_{1}$ and $N_{2}$ is a 2-absorbing primary submodule of $M_{2}$
or $N_{2}=M_{2}$ and $N_{1}$ is a 2-absorbing primary submodule of $M_{1}$
or $N_{1},$ $N_{2}$ are primary submodules of $M_{1}$, $M_{2},$ respectively.
\end{enumerate}
\end{theorem}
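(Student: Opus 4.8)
The plan is to prove the two implications separately, with the reverse direction being almost a restatement of the previous Theorem~\ref{cart} and the forward direction routed through the residual ideal $(N:_{R}M)$. Before starting I would record two reductions. First, $M_{1}$ and $M_{2}$ are themselves finitely generated multiplication modules: applying the idempotents $(1,0),(0,1)\in R$ to a finite generating set of $M=M_{1}\times M_{2}$ yields finite generating sets of the two coordinates, and they are multiplication by hypothesis. Second, I would use the two product formulas $(N:_{R}M)=(N_{1}:_{R_{1}}M_{1})\times(N_{2}:_{R_{2}}M_{2})$ and $M\text{-}\mathrm{rad}(N)=M_{1}\text{-}\mathrm{rad}(N_{1})\times M_{2}\text{-}\mathrm{rad}(N_{2})$, the latter already recorded in the paragraph preceding the theorem.

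For $(2)\Rightarrow(1)$, each of the three disjuncts is settled by one part of Theorem~\ref{cart}. If $N_{1}=M_{1}$ and $N_{2}$ is $2$-absorbing primary in $M_{2}$, then $N=M_{1}\times N_{2}$ is $2$-absorbing primary by Theorem~\ref{cart}(2); the symmetric case $N_{2}=M_{2}$ is Theorem~\ref{cart}(1); and if $N_{1}$ and $N_{2}$ are primary then $N=N_{1}\times N_{2}$ is $2$-absorbing primary by Theorem~\ref{cart}(3). So this direction costs nothing new.

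For $(1)\Rightarrow(2)$, since $M$ is finitely generated multiplication, Theorem~\ref{m-rad}(1) tells me that $(N:_{R}M)$ is a $2$-absorbing primary ideal of $R=R_{1}\times R_{2}$. Writing it as $(N_{1}:_{R_{1}}M_{1})\times(N_{2}:_{R_{2}}M_{2})$ and invoking the classification of $2$-absorbing primary ideals of a direct product of two rings from \cite{Bt}, exactly one of three possibilities holds: $(N_{1}:_{R_{1}}M_{1})=R_{1}$ with $(N_{2}:_{R_{2}}M_{2})$ $2$-absorbing primary, or the symmetric statement, or both $(N_{i}:_{R_{i}}M_{i})$ are primary ideals. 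I then translate each to the module level. The equality $(N_{i}:_{R_{i}}M_{i})=R_{i}$ is equivalent to $N_{i}=M_{i}$ because $N_{i}=(N_{i}:_{R_{i}}M_{i})M_{i}$ in a multiplication module; a $2$-absorbing primary residual forces $N_{i}$ to be a $2$-absorbing primary submodule by Theorem~\ref{conve}; and a primary residual forces $N_{i}$ to be a primary submodule.

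The only step not already on record is this last translation, which I would isolate as a short lemma: if $M$ is a multiplication module and $(N:_{R}M)$ is a primary ideal, then $N$ is a primary submodule. To prove it, take $am\in N$ with $m\notin N$; since $M$ is multiplication, $Rm=(Rm:_{R}M)M$, so $am\in N$ gives $a(Rm:_{R}M)\subseteq(N:_{R}M)$. As $m\notin N=(N:_{R}M)M$ we have $(Rm:_{R}M)\not\subseteq(N:_{R}M)$, so primariness of $(N:_{R}M)$ forces $a\in\sqrt{(N:_{R}M)}$, whence $a^{k}M\subseteq N$ for some $k$; thus $N$ is primary. I expect the main obstacle to be purely organizational rather than conceptual: stating the product decompositions of $(N:_{R}M)$ and $M\text{-}\mathrm{rad}(N)$ cleanly, confirming that the submodules remain proper in each branch, and making sure the degenerate branches $N_{i}=M_{i}$ are not silently dropped when the ideal-theoretic classification of \cite{Bt} is applied.
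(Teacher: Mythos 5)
Your proposal is correct and follows essentially the same route as the paper: the forward direction passes through Theorem~\ref{m-rad}(1), the classification of $2$-absorbing primary ideals of $R_{1}\times R_{2}$ from \cite{Bt}, and Theorem~\ref{conve}, while the reverse direction is read off from Theorem~\ref{cart}. The only addition is your short lemma that a primary residual $(N_{i}:_{R_{i}}M_{i})$ forces $N_{i}$ to be a primary submodule of the multiplication module $M_{i}$, which correctly fills in a step the paper dismisses with ``clearly.''
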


\begin{proof}
(1)$\Rightarrow $(2) Suppose that $N=N_{1}\times N_{2}$ is a 2-absorbing
primary submodule of $M$. Then $(N:M)=(N_{1}:M_{1})\times (N_{2}:M_{2})$ is
a 2-absorbing primary ideal of $R=R_{1}\times R_{2}$ by Theorem \ref{m-rad}.
From Theorem 2.3 in \cite{Bt}, we have $(N_{1}:M_{1})=R_{1}$ and $%
(N_{2}:M_{2})$ is a 2-absorbing primary ideal of $R_{2}~$or $%
(N_{2}:M_{2})=R_{2}$ and $(N_{1}:M_{1})$ is a 2-absorbing primary ideal of $%
R_{1}$ or $(N_{1}:M_{1})$ and $(N_{2}:M_{2})$ are primary ideals of $R_{1},$ 
$R_{2},$ respectively. Assume that $(N_{1}:M_{1})=R_{1}$ and $(N_{2}:M_{2})$
is a 2-absorbing primary ideal of $R_{2}.$ Thus $N_{1}=M_{1}$ and $N_{2}$ is
a 2-absorbing primary submodule of $M_{2}$ by Theorem \ref{conve}. Similarly
if $(N_{2}:M_{2})=R_{2}$ and $(N_{1}:M_{1})$ is a 2-absorbing primary ideal
of $R_{1},$ then $N_{2}=M_{2}$ and $N_{1}$ is a 2-absorbing primary
submodule of $M.$ And if the last case hold, then clearly we conclude that $%
N_{1},$ $N_{2}$ are primary submodules of $M_{1}$, $M_{2},$ respectively.\\
(2)$\Rightarrow $(1) \ It is clear from Theorem \ref{cart}.
\end{proof}


\begin{thebibliography}{99}
\bibitem{A} M. M. Ali, Idempotent and nilpotent submodules of multiplication
modules, Comm. Algebra, \textbf{36} (2008), 4620--4642.

\bibitem{Al} M. M. Ali, Invertiblity of Multiplication Modules III, New
Zeland J. Math., \textbf{39} (2009), 193-213.

\bibitem{Am} R. Ameri, On the prime submodules of multiplication modules,
Inter. J. Math. Math. Sci., \textbf{27} (2003), 1715--1724.

\bibitem{AB} D. D. Anderson and M. Batanieh, Generalizations of prime
ideals, Comm. Algebra, \textbf{36} (2008), 686--696.

\bibitem{AS} D. D. Anderson and E. Smith, Weakly prime ideals, Houston J.
Math., \textbf{29} (2003), 831--840.

\bibitem{AB1} D. F. Anderson and A. Badawi, On $n$-absorbing ideals of
commutative rings, Comm. Algebra, \textbf{39} (2011), 1646--1672.

\bibitem{AF} F. Anderson and K. Fuller, Rings and categories of modules.
New-York: Springer-Verlag, 1992.

\bibitem{Ash} R. B. Ash, A course in commutative algebra. University of
Illinois, 2006.

\bibitem{B} A. Badawi, On $2$-absorbing ideals of commutative rings, Bull.
Austral. Math. Soc., \textbf{75} (2007), 417--429.

\bibitem{YB} A. Badawi and A. Yousefian Darani, On weakly $2$-absorbing
ideals of commutative rings, Houston J. Math., \textbf{39} (2013), 441--452.

\bibitem{Bt} A. Badawi, \"{U}. Tekir and E. Yetkin, On $2$-absorbing primary
ideals in commutative rings, Bull. Korean Math. Soc., \textbf{51} (4)
(2014), 1163--1173.

\bibitem{Ba} A. Barnard, Multiplication modules, J. Algebra, \textbf{71}
(1981), 174--178.

\bibitem{BK} M. Behboodi and H. Koohi, Weakly prime modules, Vietnam J.
Math., \textbf{32} (2) (2004), 185--195.

\bibitem{BS} S. M. Bhatwadekar and P. K. Sharma, Unique factorization and
birth of almost primes, Comm. Algebra, \textbf{33} (2005), 43--49.

\bibitem{EC} S. Ebrahimi Atani, F. \c{C}allialp and \"{U}. Tekir, A Short
note on the primary submodules of multiplication modules, Inter. J. Algebra, 
\textbf{8} (1) (2007), 381--384.

\bibitem{E1} S. Ebrahimi Atani and F. Farzalipour, On weakly prime
submodules, Tamk. J. Math., \textbf{38} (3) (2007), 247--252.

\bibitem{ES} Z. A. El-Bast and P. F. Smith, Multiplication modules, Comm.
Algebra, \textbf{16} (1988), 755--779.

\bibitem{F} C. Faith, Algebra: Rings, modules and categories I,
Springer-Verlag Berlin Heidelberg New York 1973.

\bibitem{G} R. Gilmer, Multiplicative ideal theory. Queens Papers Pure Appl.
Math. 90, Queens University, Kingston (1992).

\bibitem{Kh} M. Khoramdel and S. Dolati Pish Hesari, Some notes on Dedekind
modules, Hacettepe J. Math. Stat., \textbf{40} (2011), 627--634.

\bibitem{MM} R. L. McCasland and M. E. Moore, On radicals of submodules of
finitely generated modules, Canad. Math. Bull., \textbf{29} (1986), 37--39.

\bibitem{Mcc} R. L. McCasland and M. E. Moore, Radicals of submodules, Comm.
Algebra, \textbf{19} (1991), 1327-1341.

\bibitem{NA} A. G. Naoum and F. H. Al-Alwan, Dedekind modules, Comm.
Algebra, \textbf{24} (2)(1996), 397-412.

\bibitem{p} Sh. Payrovi and S. Babaei, On 2-absorbing submodules, Algebra
Colloq., \textbf{19} (2012), 913-920.

\bibitem{S} {P. F. Smith}, Some remarks on multiplication modules, Arch.
Math., \textbf{50} (1988), 223--235.

\bibitem{YF1} A. Yousefian Darani and F. Soheilnia, On $2$-absorbing and
weakly $2$-absorbing submodules, Thai J. Math., \textbf{9} (2011), 577-584

\bibitem{YF2} A. Yousefian Darani and F. Soheilnia, On $n$-absorbing
submodules, Math. Commun., \textbf{17} (2012), 547-557.
\end{thebibliography}
\end{document}